 \pgfmathsetlengthmacro\lw{.3pt+.5\pgflinewidth}
 \pgfmathtruncatemacro\dashnum{%
 round((\pgfdecoratedinputsegmentlength-3pt)/6pt)
 }
 \pgfmathsetmacro\dashscale{%
 \pgfdecoratedinputsegmentlength/(\dashnum*6pt + 3pt)
 }
 \pgfmathsetlengthmacro\dashunit{3pt*\dashscale}
\definecolor{Mycolor2}{HTML}{e85d04}
\newcommand{\qhyp}[5]{\,\mbox{}_{#1}\phi_{#2}\!\left(\!\!\begin{array}{c}{#3}\\[0.10cm]{#4}\end{array};{#5}\right)}
\newcommand{\Hhyp}[5]{\,\mbox{}_{#1}H_{#2}\!\left(\!\!
\begin{array}{c}{#3}\\[0.08cm] {#4}\end{array};#5\right)}
\newtheorem{thm}{Theorem}[section]
\newtheorem{rem}[thm]{Remark}
\def\eqnarray{\stepcounter{equation}\let\@currentlabel=\theequation
\global\@eqnswtrue
\tabskip\@centering\let\\=\@eqncr
$$\halign to\displaywidth\bgroup\hfil\global\@eqcnt\z@
$\displaystyle\tabskip\z@{##}$&\global\@eqcnt\@ne
\hfil$\displaystyle{{}##{}}$\hfil
&\global\@eqcnt\tw@ $\displaystyle{##}$\hfil
\tabskip\@centering&\llap{##}\tabskip\z@\cr}
\def\endeqnarray{\@@eqncr\egroup
\global\advance\c@equation\m@ne$$\global\@ignoretrue}
\def\@yeqncr{\@ifnextchar [{\@xeqncr}{\@xeqncr[5pt]}}
\newcommand{\Z}{\mathbb{Z}} 
\newcommand{\R}{\mathbb{R}} 
\newcommand{\N}{\mathbb{N}} 
\newcommand{\CC}{{{\mathbb C}}}
\newcommand{\CCast}{{{\mathbb C}^\ast}}
\newcommand{\CCdag}{{{\mathbb C}^\dag}}
\newcommand{\expe}{{\mathrm e}}
\newcommand{\dd}{{\mathrm d}}
\let\svus_
\def\lowerit#1{\ThisStyle{\raisebox{-2\LMpt}{$\SavedStyle#1$}}\egroup}
\begin{document}
\renewcommand{\PaperNumber}{***}

\FirstPageHeading

\ShortArticleName{Bilateral discrete and continuous orthogonality relations in the $q^{-1}$-symmetric Askey scheme}

\ArticleName{Bilateral discrete and continuous orthogonality relations in the $q^{-1}$-symmetric Askey scheme}
\Author{Howard S. Cohl$\,^{\ast}\orcidB{}$
and Hans Volkmer$\,^{\dag}\orcidD{}$
}
\AuthorNameForHeading{H.~S.~Cohl, H.~Volkmer
}
\Address{$^\ast$ Applied and Computational 
Mathematics Division, National Institute 
of Standards 
and Technology, Gaithersburg, MD 20899-8910, USA
\URLaddressD{
\href{http://www.nist.gov/itl/math/msg/howard-s-cohl.cfm}
{http://www.nist.gov/itl/math/msg/howard-s-cohl.cfm}
}
} 
\EmailD{howard.cohl@nist.gov} 

\Address{$^\dag$ Department of Mathematical Sciences, University of Wisconsin–Milwaukee, PO Box 413,
Milwaukee, WI 53201, USA
\URLaddressD{
\href{https://uwm.edu/math/people/volkmer-hans/}
{https://uwm.edu/math/people/volkmer-hans/}
}
} 
\EmailD{volkmer@uwm.edu} 



\ArticleDates{Received~\today~in final form ????; 
Published online ????}
\Abstract{In the $q^{-1}$-symmetric Askey scheme, namely the $q^{-1}$-Askey--Wilson, continuous dual $q^{-1}$-Hahn, $q^{-1}$-Al-Salam--Chihara, continuous big $q^{-1}$-Hermite and continuous $q^{-1}$-Hermite polynomials, we compute bilateral discrete and continuous orthogonality relations. We also derive a $q$-beta integral which comes from the continuous orthogonality relation for the $q^{-1}$-Askey--Wilson polynomials. In the $q\to 1^{-}$ limit, this $q$-beta integral corresponds to a beta integral of Ramanujan-type which we present and provide two proofs for.
}
\Keywords{basic hypergeometric functions;
orthogonal polynomials; 
$q$-Askey scheme;
$q^{-1}$-symmetric polynomials;
orthogonality relations;
generating functions;
terminating representations;
duality relations.}


\Classification{33D45, 05A15, 42C05, 05E05, 33D15}




\vspace{-0.5cm}

\section{Preliminaries}
Recall the notion of a {\it multiset} 
which extends the definition of a set where the multiplicity of elements is allowed. This notion becomes important for 
basic hypergeometric functions, where numerator parameter entries or denominator parameter entries may be identical.
We adopt the following set 
notations: $\mathbb N_0:=\{0\}\cup
\mathbb N=\{0, 1, 2,\ldots\}$, and if 
$\mathbb C$ which represents 
the set of 
complex numbers, then 
$\CCast:=\CC\setminus\{0\}$,  
$\CCdag:=\{z\in\CCast: |z|<1\}$.
We also need 
the 
{\it $q$-shifted factorial} 
$(a;q)_n=(1-a)(1-qa)\cdots(1-q^{n-1}a)$, 
$n\in\mathbb N_0$ and 
one may define
\begin{eqnarray}
&&\hspace{-11.3cm}(a;q)_\infty:=\prod_{n=0}^\infty 
(1-aq^{n}),\label{poch.id:2}
\end{eqnarray}
where $|q|<1$. 
Define ${\bf a}:=\{a_1,\ldots,a_r\}$,
${\bf b}:=\{b_1,\ldots,b_s\}$. 
The {\it basic hypergeometric series}, which we 
will often use, is defined for
$q,z\in\CCast$ such that $|q|<1$, $s,r\in\mathbb N_0$, 
$b_j\not\in\Omega_q$, $j=1, \ldots, s$, as
 \cite[(1.10.1)]{Koekoeketal}
\begin{equation}
\qhyp{r}{s}{\bf a}
{\bf b}
{q,z}:=
{}_r\phi_s({\bf a};{\bf b};q,z)
:=\sum_{k=0}^\infty
\frac{({\bf a};q)_k}
{(q,{\bf b};q)_k}
\left((-1)^kq^{\binom k2}\right)^{1+s-r}
z^k.
\label{2.11}
\end{equation}
\noindent For $s+1>r$, ${}_{r}\phi_s$ is an entire
function of $z$, for $s+1=r$ then 
${}_{r}\phi_s$ is convergent for $|z|<1$, and 
for $s+1<r$ the series
is divergent unless it is {\it terminating} (one of the numerator parameters is $q^{-n}$ for $n\in\N_0$).

\section{Introduction}

\noindent Define the following orthogonal polynomials in the $q^{-1}$-symmetric family of orthogonal polynomials in the $q^{-1}$-Askey-scheme, namely
\begin{eqnarray}
&&\hspace{-3cm}\bm{\mathsf{p}}_n(x;a,b,c,d|q):=\bm{\mathsf{p}}_n[z;a,b,c,d|q]:=i^{-n}p_n[iz;ia,ib,ic,id|q^{-1}],\label{qiAW}\\  
&&\hspace{-3cm}\bm{\mathsf{p}}_n(x;a,b,c|q):=\bm{\mathsf{p}}_n[z;a,b,c|q]:=i^{-n}p_n[iz;ia,ib,ic|q^{-1}],\label{cdqiH}\\  
&&\hspace{-3.02cm}\bm{\mathsf{Q}}_n(x;a,b|q):=\bm{\mathsf{Q}}_n[z;a,b|q]:=i^{-n}Q_n[iz;ia,ib|q^{-1}],\label{qiASC}\\  
&&\hspace{-3cm}\bm{\mathsf{H}}_n(x;a|q):=\bm{\mathsf{H}}_n[z;a|q]:=i^{-n}H_n[iz;ia|q^{-1}],\label{cbqiH}\\  
&&\hspace{-3cm}\bm{\mathsf{H}}_n(x|q):=\bm{\mathsf{H}}_n[z|q]:=i^{-n}H_n[iz|q^{-1}],
\label{cqiH}
\end{eqnarray}
where $x=\frac12(z-z^{-1})$. The Askey--Wilson polynomials $p_n(x;a,b,c,d|q)$ and their symmetric subfamilies (the continuous dual $q$-Hahn $p_n(x;a,b,c|q)$, Al-Salam--Chihara $Q_n(x;a,b|q)$, continuous big $q$-Hermite $H_n(x;a|q)$ and continuous $q$-Hermite polynomials $H_n(x|q)$) are defined in \cite[(14.1.1), (14.3.1), (14.8.1), (14.18.1), (14.26.1)]{Koekoeketal} respectively. We refer to the polynomials \eqref{qiAW}--\eqref{cqiH}, as the $q^{-1}$-Askey--Wilson, continuous dual $q^{-1}$-Hahn, $q^{-1}$-Al-Salam--Chihara, continuous big $q^{-1}$-Hermite and continuous $q^{-1}$-Hermite polynomials respectively. 
These versions of these polynomials have the nice property that they remain symmetric in their parameters and are very easy to compute directly from terminating basic hypergeometric representations of the symmetric subfamilies of the Askey--Wilson polynomials using \cite[(1.8.7)]{Koekoeketal}
$(a;q^{-1})_n=q^{-\binom{n}{2}}(-a)^n(\frac{1}{a};q)_n$, $a\ne 0.$ Furthermore in both the $q$-symmetric Askey scheme or in the $q^{-1}$-symmetric Askey scheme, these polynomials can be obtained  simply by taking limits as $d\to c\to b\to a\to 0$.
Note that it is easily verified that the $q^{-1}$-Askey-Wilson polynomials
$p_n(x;a,b,c,d|q^{-1})$ which are simply renormalized Askey--Wilson polynomials with
parameters given by their reciprocals,
are
given by
\begin{eqnarray}
&&\hspace{-5cm}\bm{\mathsf p}_n(x;a,b,c,d|q)
=q^{-3\binom{n}{2}}(iabcd)^np_n(x;-\tfrac{i}{a},-\tfrac{i}{b},
-\tfrac{i}{c},-\tfrac{i}{d}|q).
\end{eqnarray}

One terminating basic hypergeometric series representation of these polynomials is given by 
\begin{eqnarray}
&&\hspace{-0.0cm}\bm{\mathsf{p}}_n(x;a,b,c,d|q)=q^{-3\binom{n}{2}}(-a^2bcd)^n\left(-\frac{1}{ab},-\frac{1}{ac},-\frac{1}{ad};q\right)_n\qhyp43{q^{-n},\frac{q^{n-1}}{abcd},\frac{z}{a},-\frac{1}{az}}{-\frac{1}{ab},-\frac{1}{ac},-\frac{1}{ad}}{q,q}
\label{qiAWPP1}\\
&&\hspace{1.8cm}=q^{-3\binom{n}{2}}(-abcdz)^n\left(-\frac{1}{ab},-\frac{1}{cz},-\frac{1}{dz};q\right)_n\qhyp43{q^{-n},\frac{z}{a},\frac{z}{b},-q^{1-n}cd}{-\frac{1}{ab},-q^{1-n}cz,-q^{1-n}dz}{q,q}.
\label{qiAWPP2}
\end{eqnarray}
Two basic hypergeometric series representation of the continuous dual $q^{-1}$-Hahn polynomials are given by 
\begin{eqnarray}
&&\hspace{-2.2cm}\bm{\mathsf{p}}_n(x;a,b,c|q)=q^{-\binom{n}{2}}(-a)^n(\tfrac{z}{a},-\tfrac{1}{az};q)_n\qhyp32{q^{-n},-q^{1-n}ab,-q^{1-n}ac}{-q^{1-n}az,\frac{q^{1-n}a}{z}}{q,q}\\
&&\hspace{0.20cm}=q^{-2\binom{n}{2}}(-abc)^n(-\tfrac{1}{ab},-\tfrac{1}{ac};q)_n\qhyp32{q^{-n},\frac{z}{a},-\frac{1}{az}}{-\frac{1}{ab},-\frac{1}{ac}}{q,-\frac{q^n}{bc}}.
\end{eqnarray}
A terminating basic hypergeometric representation of the $q^{-1}$-Al-Salam--Chihara polynomials is given by 
\begin{equation}
\hspace{0.3cm}\bm{\mathsf Q}(x;a,b|q)=q^{-\binom{n}{2}}(-b)^n(-\frac{1}{ab};q)_n\qhyp31{q^{-n},\frac{z}{a},-\frac{1}{az}}{-\frac{1}{ab}}{q,q^n\frac{a}{b}}.
\end{equation}
A terminating basic hypergeometric representation of the continuous big $q^{-1}$-Hermite polynomials is given by 
\begin{equation}
\hspace{0.3cm}\bm{\mathsf H}(x;a|q)=(-\tfrac{1}{a})^n\qhyp30{q^{-n},\frac{z}{a},-\frac{1}{az}}{-}{q,-q^na^2}.
\end{equation}
A terminating basic hypergeometric representation of the continuous $q^{-1}$-Hermite polynomials is given by 
\begin{equation}
\hspace{0.3cm}\bm{\mathsf H}(x|q)=z^n\qhyp11{q^{-n}}{0}{q,-\frac{q}{z^2}}.
\end{equation}

\medskip
For comparison with the similar polynomials which exist in the literature, observe that Askey's continuous $q^{-1}$-Hermite polynomials are given by 
\begin{equation}
\hspace{0.35cm}\bm{\mathsf{H}}_n(x|q)=h_n(x|q), 
\end{equation}
Ismail's $q^{-1}$-Al-Salam--Chihara polynomials are
\cite[(3.5)]{Ismail2020}, 
\begin{equation}
\hspace{0.3cm}Q_n(x;a,b):=a^n\frac{(\frac{z}{a};q)_n}{(q;q)_n}\qhyp21{q^{-n},-\frac{1}{bz}}{q^{1-n}\frac{a}{z}}{q,\frac{qb}{z}},
\end{equation}
and they are related as follows
\begin{equation}
\hspace{0.3cm}\bm{\mathsf{Q}}_n(x;a,b|q)=q^{-\binom{n}{2}}(-1)^n(q;q)_nQ_n(x;a,b).
\end{equation}
Furthermore, Ismail--Zhang--Zhou's continuous dual $q^{-1}$-Hahn polynomials defined by 
$V_n(x;{\bf t}|q)$ given in \cite[(5.2)]{IsmailZhangZhou2022}, namely
\begin{equation}
\hspace{0.35cm}V_n(x;a,b,c|q):=
\left(\frac{a}{q}\right)^n
\frac{(-\frac{q^2}{ac};q)_n}{(-\frac{q^2}{bc};q)_n}
\qhyp32{q^{-n},\frac{qz}{a},-\frac{q}{az}}{-\frac{q^2}{ab},-\frac{q^2}{ac}}{q,-\frac{q^{n+2}}{bc}},
\label{Vdef}
\end{equation}
they are related as follows
\begin{equation}
\hspace{0.35cm}\bm{\mathsf{p}}_n(x;a,b,c|q)=q^{-2\binom{n}{2}}(-bc)^n(-\tfrac{1}{ab},-\tfrac{1}{bc};q)_nV_n(x;qa,qb,qc|q).
\end{equation}

\noindent With Ismail--Zhang--Zhou's $q^{-1}$-Askey--Wilson polynomials defined by 
$p_n(x,{\bf t})$ given in \cite[(2.7)]{IsmailZhangZhou2022}, namely with ${\bf a}:=\{a,b,c,d\}$, 
\begin{equation}
\hspace{0.35cm}p_n(x,{\bf a}):=
\left(\frac{a}{q}\right)^n
\left(-\frac{q^2}{ab},-\frac{q^2}{ac},-\frac{q^2}{ad};q\right)_n
\qhyp43{q^{-n},\frac{q^{n+3}}{abcd},\frac{qz}{a},-\frac{q}{az}}{-\frac{q^2}{ab},-\frac{q^2}{ac},-\frac{q^2}{ad}}{q,q},
\label{pdef}
\end{equation}
they are related as follows
\begin{equation}
\hspace{0.35cm}\bm{\mathsf{p}}_n(x;a,b,c,d|q)=q^{-3\binom{n}{2}}(-abcd)^n\,p_n(x,qa,qb,qc,qd).
\label{qiAWrel}
\end{equation}

It will be helpful to discuss the state of the art in regards to orthogonality relations for the $q^{-1}$-symmetric families, namely the $q^{-1}$-Askey--Wilson polynomials and their symmetric subfamilies. The orthogonality (and biorthogonality) relations we discuss are all for a finite-family, namely there exists an $N\in\N_0$ such that the orthogonality relation is only valid for all $n,m\le N$. However when one considers the limit as $d\to 0$ of the $q^{-1}$-Askey--Wilson polynomials to the continuous dual $q^{-1}$-Hahn polynomials, then the orthogonality relations lead to an infinite family. However, for the remainder of this section, let's restrict ourselves to the four parameter finite family.

The first orthogonality relation found for this family is that which comes from the weight function which Askey found for the continuous $q^{-1}$-Hermite polynomials in \cite{MR1019849}. According to Ismail \& Masson \cite[(7.31)]{IsmailMasson1994}, Askey proved that these polynomials satisfy the following orthogonality relation in \cite{MR993347}, given as follows
\begin{eqnarray}
&&\hspace{-1.0cm}\int_{-\infty}^\infty \bm{\mathsf p}_m[q^x;{\bf a}|q]
\bm{\mathsf p}_n[q^x;{\bf a}|q]
\frac{(-q^{1+x}{\bf a},q^{1-x}{\bf a};q)_\infty}{(-q^{2x+1},-q^{1-2x};q)_\infty}\,\dd x\nonumber\\
&&\hspace{0.0cm}=\frac{(q,-qab,-qac,-qad,-qbc,-qbd,-qcd;q)_\infty}{(qabcd;q)_\infty}\nonumber\\
&&\hspace{2cm}\times q^{-6\binom{n}{2}}(-a^2b^2c^2d^2)^n
\frac{(q,-\frac{1}{ab},-\frac{1}{ac},-\frac{1}{ad},-\frac{1}{bc},-\frac{1}{bd},-\frac{1}{cd};q)_n(\frac{1}{qabcd};q)_{2n}}{(\frac{1}{qabcd};q)_n(\frac{1}{abcd};q)_{2n}}\delta_{m,n}.
\end{eqnarray}
This orthogonality relation is explicitly given in \cite[(2.9)]{IsmailZhangZhou2022}. The total mass of this orthogonality relation corresponding to the $m=n=0$ case is the famous Askey $q$-beta integral \cite{MR993347}. This orthogonality relation, by taking limits $d\to c\to b\to a\to 0$, leads to orthogonality relations for the symmetric subfamilies of the $q^{-1}$-Askey--Wilson polynomials. There also exists, in the literature, an infinite discrete bilateral orthogonality relation for the $q^{-1}$-Askey--Wilson polynomials. This orthogonality relation is given in \cite[(2.13)]{IsmailZhangZhou2022} with typographical errors and the corrected version is given 
in Theorem \ref{thm11} below.

There exists a $q$-beta integral due to Ismail--Masson \cite[(7.30)]{IsmailMasson1994} (see also cf.~\cite[(3.13)]{IsmailRahman1995}).
It is given by 
\begin{eqnarray}
&&\hspace{-0.7cm}\int_0^\infty
\frac{(iaz,-\frac{ia}{z},ibz,-\frac{ib}{z},icz,-\frac{ic}{z},idz,-\frac{id}{z};q)_\infty}{(fz,gz,-\frac{qz}{f},-\frac{qz}{g},-\frac{f}{z},-\frac{g}{z},\frac{q}{fz},\frac{q}{gz};q)_\infty}\left(1+\frac{1}{z^{2}}\right)\,\dd z
=\frac{2\pi i (\frac{ab}{q},\frac{ac}{q},\frac{ad}{q},\frac{bc}{q},\frac{bd}{q},\frac{cd}{q};q)_\infty}{(q,\frac{g}{f},\frac{qf}{g},-fg,-\frac{q}{fg},\frac{abcd}{q^3};q)_\infty},
\end{eqnarray}
where $\Im f$, $\Im g$ and $\Im(f/g)$ are not $0(\!\!\!\mod 2\pi)$ and $\Im(fg) \ne \pi(\!\!\!\mod 2\pi)$.
This $q$-beta integral leads to an orthogonality relation for continuous $q^{-1}$-Hermite polynomials, but apparently it leads to a biorthogonality relation in the 4-parameter case, see Ismail \& Masson \cite{IsmailMasson1994} (I have not yet fully verified this).

In this paper, we obtain a new continuous orthogonality relation in the 4-parameter case by starting with the infinite discrete bilateral orthogonality relation for the $q^{-1}$-Askey--Wilson polynomials given in Theorem \ref{thm11} below and using the methods described in \cite{IsmailRahman1995} in order to obtain a continuous analogue. It is given in Theorem \ref{thm02} below. The weight function for this orthogonality relation in the $d\to c\to b\to a\to 0$ case of the continuous $q^{-1}$-Hermite polynomials is given 
in \cite[Theorem 21.6.4]{Ismail:2009:CQO} for $w_2(x)$. We will also present the orthogonality relations which arise from this finite family orthogonality for the subfamilies of the $q^{-1}$-Askey--Wilson polynomials. 

\section{Infinite discrete bilateral orthogonality relations}

The $q^{-1}$-symmetric families in the $q$-Askey scheme satisfy the following infinite discrete bilateral orthogonality relations.

\begin{thm}
\label{thm11}
Let $m,n,N\in\N_0$, $q\in\CCdag$, $\alpha,a,b,c,d\in\CCast$, 
$n,m \le N$ such that  $|qabcd|< |q|^{2N}$. Then the $q^{-1}$-Askey--Wilson polynomials satisfy the following infinite discrete bilateral orthogonality relation: 
\begin{eqnarray}
&&\hspace{-2.7cm}\sum_{k=-\infty}^\infty (1+q^{2k}\alpha^2)\bm{\mathsf p}_m[q^k\alpha;{\bf a}|q]
\bm{\mathsf p}_n[q^k\alpha;{\bf a}|q]
\frac{(\frac{\alpha}{\mathbf a};q)_k}{(-q\alpha {\mathbf a};q)_k}(q abcd)^k\nonumber\\
&&\hspace{-1.7cm}=
\frac{(q,-\alpha^2,-\frac{q}{\alpha^2},-qab,-qac,-qad,-qbc,-qbd,-qcd;q)_\infty}{(-q\alpha\mathbf a,\frac{q{\mathbf a}}{\alpha},qabcd;q)_\infty}\nonumber\\
&&\hspace{-1.1cm}\times q^{-6\binom{n}{2}}(a^2b^2c^2d^2)^n
\frac{(q,-\frac{1}{ab},-\frac{1}{ac},-\frac{1}{ad},-\frac{1}{bc},-\frac{1}{bd},-\frac{1}{cd};q)_n(\frac{1}{qabcd};q)_{2n}}{(\frac{1}{qabcd};q)_n(\frac{1}{abcd};q)_{2n}}\delta_{m,n}.
\label{orthogqiAWdisc}
\end{eqnarray}
\label{thm21}
\end{thm}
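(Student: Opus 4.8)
The plan is to anchor everything in the total-mass case $m=n=0$ and then bootstrap to general $m,n$. First I would observe that the summand in \eqref{orthogqiAWdisc} is very-well-poised: with $z=q^k\alpha$, the factor $(1+q^{2k}\alpha^2)$ is the very-well-poised factor attached to the base $a_0=-\alpha^2$, the four reciprocal pairs satisfy $\frac{\alpha}{a}\cdot(-q\alpha a)=-q\alpha^2=a_0q$ (and likewise for $b,c,d$), so that $\{\frac{\alpha}{a},\frac{\alpha}{b},\frac{\alpha}{c},\frac{\alpha}{d}\}$ pair with the denominators $-q\alpha a,\dots$ as $a_0q/b_0$, while the geometric ratio $(qabcd)^k$ equals Bailey's $(qa_0^2/(b_0c_0d_0e_0))^k$. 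Hence for $m=n=0$ the left-hand side is $(1+\alpha^2)$ times a Bailey ${}_6\psi_6$, whose summation reproduces all the infinite products on the right of \eqref{orthogqiAWdisc} once one uses $(-\alpha^2;q)_\infty=(1+\alpha^2)(-q\alpha^2;q)_\infty$; this also explains the convergence requirement $|qabcd|<1$, which is the $N=0$ instance of $|qabcd|<|q|^{2N}$.

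For general $m,n$ I would separate orthogonality (the $\delta_{m,n}$) from the normalisation. For orthogonality, the cleanest route is to transport the second-order $q$-difference (Askey--Wilson) operator through the substitution defining \eqref{qiAW}: the $q^{-1}$-Askey--Wilson polynomials are eigenfunctions of a divided-difference operator $\mathcal{L}$ on the bilateral lattice $x_k=\tfrac12(q^k\alpha-q^{-k}\alpha^{-1})$, with pairwise distinct eigenvalues $\lambda_n$ for $n\le N$. One then checks that the discrete weight $w_k=(1+q^{2k}\alpha^2)\frac{(\alpha/\mathbf a;q)_k}{(-q\alpha\mathbf a;q)_k}(qabcd)^k$ satisfies the Pearson-type relation making $\mathcal{L}$ symmetric for the bilinear form $\langle f,g\rangle=\sum_k w_k\,f(x_k)g(x_k)$; the boundary contributions at $k\to\pm\infty$ vanish precisely because $|qabcd|<|q|^{2N}$ forces geometric decay of the summand against polynomials of degree $\le N$. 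Symmetry of $\mathcal{L}$ together with $\lambda_m\neq\lambda_n$ then yields $\langle\mathsf{p}_m,\mathsf{p}_n\rangle=0$ for $m\neq n$ in the standard way.

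For the normalisation $h_n=\langle\mathsf{p}_n,\mathsf{p}_n\rangle$ I would avoid a diagonal bilateral sum and instead iterate the three-term recurrence. Since the $q^{-1}$-Askey--Wilson polynomials inherit (via \eqref{qiAWrel} and the reciprocal-parameter Askey--Wilson recurrence) an explicit three-term recurrence, orthogonality gives $h_n/h_{n-1}$ as an explicit ratio of recurrence coefficients; telescoping from $h_0$ (the ${}_6\psi_6$ value above) then produces the constant displayed on the right-hand side of \eqref{orthogqiAWdisc}. Alternatively one can compute $h_n$ directly: applying $(x;q)_k(q^kx;q)_j=(x;q)_{k+j}$ to the $(z/a;q)_j$ part of the basis function $\phi_j(z)=(z/a,-1/(az);q)_j$ and the inversion $(x;q)_j=(-x)^jq^{\binom{j}{2}}(q^{1-j}/x;q)_j$ to the $(-1/(az);q)_j$ part, the product $w_k\,\phi_j(q^k\alpha)$ collapses to a constant multiple of the very same weight with $a$ replaced by $aq^{-j}$; expanding $\mathsf{p}_n$ by \eqref{qiAWPP1} and resumming each term by ${}_6\psi_6$ then gives $\langle\phi_j,\mathsf{p}_n\rangle$, which vanishes for $j<n$ (re-proving orthogonality) and supplies $h_n$ at $j=n$.

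I expect the main obstacle to be the verification of the Pearson relation together with the vanishing of the bilateral boundary terms, that is, pinning down that the displayed $w_k$ is exactly the symmetrising weight for $\mathcal{L}$ rather than merely a candidate weight consistent with it; equivalently, in the direct route, the obstacle is the careful bookkeeping of the $q$-power and sign prefactors generated by the $(x;q)_k(q^kx;q)_j$ and inversion identities, so that the resummed ${}_6\psi_6$'s match the constants in \eqref{orthogqiAWdisc}, and the justification of the interchange of the finite ${}_4\phi_3$-sum with the bilateral $k$-sum under $|qabcd|<|q|^{2N}$.
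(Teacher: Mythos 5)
Your proposal is sound, but it takes a genuinely different route from the paper. The paper's own proof is essentially citation-plus-translation: it takes the known infinite discrete bilateral orthogonality relation of Ismail--Zhang--Zhou \cite[(2.13)]{IsmailZhangZhou2022} (correcting its typographical errors), converts it to the normalization $\bm{\mathsf p}_n$ via \eqref{qiAWrel}, and then does the only real work, namely a convergence analysis: writing the summand as ${\sf A}_{k,m,n}$ and using the representation \eqref{qiAWPP2}, it shows ${\sf A}_{k,n,n}\sim {\sf B}_n^{\pm}\,(q^{1-2n}abcd)^{|k|}$ as $k\to\pm\infty$, which pins down the validity condition $|qabcd|<|q|^{2N}$ for $m,n\le N$. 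You instead rebuild the identity from scratch: Bailey's very-well-poised ${}_6\psi_6$ summation for the $m=n=0$ mass (your dictionary $a_0=-\alpha^2$, $b_0=\alpha/a$, $a_0q/b_0=-q\alpha a$, argument $qabcd$ is correct, and this is exactly the statement the paper records separately as the total-mass theorem following Theorem \ref{thm21}); a Pearson/symmetric-operator or attachment argument for the vanishing when $m\ne n$; and a three-term-recurrence telescoping (or direct ${}_6\psi_6$ resummation against the basis $(z/a,-1/(az);q)_j$) for the norm. Your unproved assertion that the eigenvalues are pairwise distinct does hold under the stated hypothesis, since $|abcd|<|q|^{2N-1}\le |q|^{n+m-1}$ forces $abcd\ne q^{n+m-1}$, which is precisely the non-degeneracy needed; and your boundary-term/decay discussion is the analogue of the paper's asymptotic analysis. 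What your approach buys is independence from the external IZZ identity --- particularly valuable here because the cited equation is known to contain typos, so a self-contained derivation doubles as a verification of the corrected statement; what it costs is substantially more bookkeeping (the Pearson verification or the attachment resummation), all of which the paper sidesteps by citation.
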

\begin{proof}
Using \cite[(2.13)]{IsmailZhangZhou2022} and applying \eqref{qiAWrel} derives the infinite discrete bilateral orthogonality relation. Now define the left-hand side of \eqref{orthogqiAWdisc} as
\begin{equation}
\sum_{k=-\infty}^\infty {\sf A}_{k,m,n}(\alpha;a,b,c,d|q).
\end{equation}
Then using \eqref{qiAWPP2}, one can see that
as $k\to\pm\infty$, one has
\begin{equation}
{\sf A}_{k,n,n}(\alpha;{\bf a}|q)\sim {\sf B}_n^{\pm}(\alpha;{\bf a}|q) (q^{1-2n}abcd)^{|k|},
\end{equation}
where the constants ${\sf B}_n^{\pm}(\alpha;{\bf a}|q)$ given by
\begin{eqnarray}
&&\hspace{-5.5cm}{\sf B}_n^{+}(\alpha;{\bf a}|q):=
\vartheta(\tfrac{\alpha}{{\bf a}};q)q^{-2\binom{n}{2}}\left(\frac{q^{1-n}abcd}{\alpha}\right)^{2n}
\frac{(\frac{1}{qabcd},\frac{1}{qabcd};q)_{2n}}{(\frac{1}{qabcd},\frac{1}{qabcd};q)_{n}},\\
&&\hspace{-5.5cm}{\sf B}_n^{-}(\alpha;{\bf a}|q):=\alpha^2
\vartheta(-q\alpha{\bf a};q)q^{-6\binom{n}{2}}(abcd\alpha)^{2n}
\frac{(\frac{1}{qabcd},\frac{1}{qabcd};q)_{2n}}{(\frac{1}{qabcd},\frac{1}{qabcd};q)_{n}},
\end{eqnarray}
are independent of $k$.
Therefore the bilateral series is convergent for some $N\in\N_0$ such that $|qabcd|<|q|^{2N}$. This completes the proof.
\end{proof}

\noindent Choosing $(m,n)=(0,0)$ provides the following infinite discrete summation 
which provides the total mass for the above infinite discrete orthogonality relation.

\begin{thm}
Let $q\in\CCdag$, $\alpha,a,b,c,d\in\CCast$. Then one has the following infinite discrete bilateral summation which is equivalent to the bilateral ${}_6\psi_6$ summation, namely 
\begin{eqnarray}
&&\hspace{-0.7cm}\sum_{k=-\infty}^\infty (1+q^{2k}\alpha^2)
\frac{(\frac{\alpha}{\mathbf a};q)_k}{(-q\alpha {\mathbf a};q)_k}(q abcd)^k
=
\frac{(q,-\alpha^2,-\frac{q}{\alpha^2},-qab,-qac,-qad,-qbc,-qbd,-qcd;q)_\infty}{(-q\alpha\mathbf a,\frac{q{\mathbf a}}{\alpha},qabcd;q)_\infty}.
\end{eqnarray}
\end{thm}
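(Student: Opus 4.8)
The plan is to recognize the left-hand side, exactly as the statement advertises, as a specialization of Bailey's very-well-poised ${}_6\psi_6$ summation formula, whose summand has the shape
\[
\sum_{k=-\infty}^{\infty}\frac{1-Aq^{2k}}{1-A}\,\frac{(B,C,D,E;q)_k}{(qA/B,qA/C,qA/D,qA/E;q)_k}\Bigl(\tfrac{qA^2}{BCDE}\Bigr)^{k},
\]
valid for $|qA^2/(BCDE)|<1$. First I would make the substitution $A=-\alpha^2$, $B=\alpha/a$, $C=\alpha/b$, $D=\alpha/c$, $E=\alpha/d$. Under this choice $qA/B=-q\alpha a$ and the three analogous denominator parameters become $-q\alpha b,-q\alpha c,-q\alpha d$, reproducing the factor $(-q\alpha\mathbf{a};q)_k$ of the claim; the numerator $(B,C,D,E;q)_k$ becomes $(\tfrac{\alpha}{\mathbf{a}};q)_k$; the argument becomes $qA^2/(BCDE)=qabcd$; and the very-well-poised prefactor becomes $(1-Aq^{2k})/(1-A)=(1+\alpha^2q^{2k})/(1+\alpha^2)$.

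Next I would account for the discrepancy between the stated prefactor $(1+q^{2k}\alpha^2)$ and Bailey's normalized $(1+\alpha^2q^{2k})/(1+\alpha^2)$ by multiplying the whole identity through by the single constant $(1+\alpha^2)$, and then verify that this lone factor converts Bailey's right-hand side into the claimed one term by term. The only nontrivial bookkeeping point is that on the product side $(1+\alpha^2)(-q\alpha^2;q)_\infty=(-\alpha^2;q)_\infty$, which turns Bailey's numerator factor $(qA;q)_\infty=(-q\alpha^2;q)_\infty$ precisely into the factor $(-\alpha^2;q)_\infty$ appearing in the statement. All remaining numerator factors $qA/(BC)=-qab,\ldots,qA/(DE)=-qcd$, together with $q$ and $q/A=-q/\alpha^2$, and all denominator factors $q/B=qa/\alpha,\ldots$ together with $qA^2/(BCDE)=qabcd$, then match the claimed right-hand side exactly.

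I do not expect any analytic difficulty; the work is purely organizational, so the only place to be careful is in confirming that the very-well-poised structure is genuinely recovered. Here $A^{1/2}=i\alpha$, so the canonical pair of ratios contributes $(iq\alpha,-iq\alpha;q)_k/(i\alpha,-i\alpha;q)_k$, and using $(x;q)_k(-x;q)_k=(x^2;q^2)_k$ this telescopes to $(-q^2\alpha^2;q^2)_k/(-\alpha^2;q^2)_k=(1+\alpha^2q^{2k})/(1+\alpha^2)$, as required. The convergence requirement $|qabcd|<1$ is exactly Bailey's $|qA^2/(BCDE)|<1$, and it coincides with the $N=0$ instance $|qabcd|<|q|^{2N}$ of Theorem~\ref{thm11}.

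Equivalently, and more in keeping with the surrounding discussion, one may obtain the identity immediately by setting $(m,n)=(0,0)$ in Theorem~\ref{thm11}: since $\bm{\mathsf p}_0[q^k\alpha;\mathbf{a}|q]=1$ and every $n$-dependent factor on its right-hand side (the power $q^{-6\binom{n}{2}}$, the factor $(a^2b^2c^2d^2)^n$, and all the Pochhammer symbols, which reduce to $(\,\cdot\,;q)_0=1$) equals $1$ at $n=0$, the bilateral sum collapses to precisely the stated summation. I would present the direct Bailey-${}_6\psi_6$ route as the proof and remark on this second route as the conceptual origin of the formula.
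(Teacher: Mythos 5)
Your proposal is correct, but your primary route is genuinely different from the paper's. The paper proves this theorem in one line: set $(m,n)=(0,0)$ in Theorem~\ref{thm21}, which is exactly the route you relegate to a closing remark (and your check that every $n$-dependent factor there collapses to $1$ at $n=0$ is accurate). Your main argument instead derives the identity directly from Bailey's very-well-poised ${}_6\psi_6$ summation with $A=-\alpha^2$, $B=\alpha/a$, $C=\alpha/b$, $D=\alpha/c$, $E=\alpha/d$; the bookkeeping is sound, including the two delicate points, namely the normalization $(1+\alpha^2)(-q\alpha^2;q)_\infty=(-\alpha^2;q)_\infty$ that absorbs the very-well-poised denominator $1-A$ into the factor $(-\alpha^2;q)_\infty$ of the stated right-hand side, and the verification via $(iq\alpha,-iq\alpha;q)_k/(i\alpha,-i\alpha;q)_k=(1+\alpha^2q^{2k})/(1+\alpha^2)$ that the very-well-poised structure is genuinely present even though $A^{1/2}$ is imaginary. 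As for what each approach buys: the paper's specialization is instantaneous, but Theorem~\ref{thm21} is itself imported from the Ismail--Zhang--Zhou orthogonality relation, so the statement's claim of equivalence with the ${}_6\psi_6$ summation is asserted there rather than exhibited; your route makes that equivalence explicit and self-contained modulo Bailey's classical formula, and it also surfaces the convergence condition $|qabcd|<1$ (Bailey's $|qA^2/(BCDE)|<1$, the $N=0$ case of the hypothesis of Theorem~\ref{thm21}), a condition which is genuinely needed but omitted from the hypotheses of the theorem as stated in the paper.
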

\begin{proof}
Setting $(m,n)=(0,0)$ in Theorem \ref{thm21} complete the proof.
\end{proof}

\begin{thm}
\label{thm12}
Let $m,n\in\N_0$, $q\in\CCdag$, $\alpha,a,b,c\in\CCast$. Then the continuous dual $q^{-1}$-Hahn polynomials satisfy the following infinite discrete bilateral orthogonality relation: 
\begin{eqnarray}
&&\hspace{-1.0cm}\sum_{k=-\infty}^\infty (1+q^{2k}\alpha^2)\bm{\mathsf p}_m[q^k\alpha;{\bf a}|q]
\bm{\mathsf p}_n[q^k\alpha;{\bf a}|q]
\frac{(\frac{\alpha}{\mathbf a};q)_k}{(-q\alpha {\mathbf a};q)_k}q^{\binom{k}{2}}(-q\alpha abc)^k\nonumber\\
&&\hspace{0.0cm}=
\frac{(q,-\alpha^2,-\frac{q}{\alpha^2},-qab,-qac,-qbc;q)_\infty}{(-q\alpha\mathbf a,\frac{q{\mathbf a}}{\alpha};q)_\infty}
q^{-4\binom{n}{2}}
\left(\frac{a^2b^2c^2}{q}\right)^n
(q,-\frac{1}{ab},-\frac{1}{ac},-\frac{1}{bc};q)_n\delta_{m,n}.
\label{orthogcdqiHdisc}
\end{eqnarray}
\end{thm}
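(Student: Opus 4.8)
The plan is to derive \eqref{orthogcdqiHdisc} as the $d\to0$ limit of the four-parameter relation \eqref{orthogqiAWdisc} of Theorem~\ref{thm11}, the route signalled in the introduction, where the continuous dual $q^{-1}$-Hahn polynomials appear as the $d\to0$ degeneration of the $q^{-1}$-Askey--Wilson polynomials. A structural point drives the argument and explains the passage from a finite to an infinite family: the admissibility constraint $|qabcd|<|q|^{2N}$ of Theorem~\ref{thm11} holds for \emph{every} $N$ once $d\to0$, so the limiting relation is no longer restricted and is valid for all $m,n\in\N_0$. I would first record the pointwise limit of the polynomials from representation \eqref{qiAWPP2}: as $d\to0$ the prefactor $(-abcdz)^n(-\tfrac1{dz};q)_n$ tends to $(-abc)^nq^{\binom{n}{2}}$, while the numerator parameter $-q^{1-n}cd$ and the denominator parameter $-q^{1-n}dz$ of the ${}_4\phi_3$ both tend to $0$; the series collapses to $\qhyp32{q^{-n},\frac za,\frac zb}{-\frac1{ab},-q^{1-n}cz}{q,q}$, and the surviving normalization $q^{-3\binom{n}{2}}\cdot(-abc)^nq^{\binom{n}{2}}=q^{-2\binom{n}{2}}(-abc)^n$ reproduces a basic hypergeometric representation of $\bm{\mathsf p}_n[z;a,b,c|q]$. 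Hence $\lim_{d\to0}\bm{\mathsf p}_n[q^k\alpha;a,b,c,d|q]=\bm{\mathsf p}_n[q^k\alpha;a,b,c|q]$ for each fixed $k$.

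Next I would pass to the limit in the discrete weight. Isolating the $d$-dependence of $\frac{(\frac{\alpha}{\mathbf a};q)_k}{(-q\alpha\mathbf a;q)_k}(qabcd)^k$, the only nontrivial factor is $\frac{(\frac\alpha d;q)_k}{(-q\alpha d;q)_k}\,d^k$, the remaining entries being $d$-free. Using $(\tfrac\alpha d;q)_k\sim(-\tfrac\alpha d)^kq^{\binom{k}{2}}$ as $d\to0$ for $k\ge0$, and the reflection $(u;q)_{-m}=1/(uq^{-m};q)_m$ together with the identity $\binom{-m}{2}=\binom{m+1}{2}$ for $k=-m<0$, this factor tends to $q^{\binom{k}{2}}(-\alpha)^k$ for all $k\in\Z$; multiplying by the surviving $(qabc)^k$ reproduces exactly the weight $q^{\binom{k}{2}}(-q\alpha abc)^k$ appearing in \eqref{orthogcdqiHdisc}.

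On the right-hand side the infinite-product prefactor is immediate, since $(-qad,-qbd,-qcd,qabcd;q)_\infty$ and the $d$-entries of $(-q\alpha\mathbf a,\tfrac{q\mathbf a}\alpha;q)_\infty$ all tend to $1$, leaving the stated prefactor in the three parameters $a,b,c$. The delicate part is the $n$-dependent normalization constant: each of $(-\tfrac1{ad},-\tfrac1{bd},-\tfrac1{cd};q)_n$, $(\tfrac1{qabcd};q)_{2n}$, $(\tfrac1{qabcd};q)_n$ and $(\tfrac1{abcd};q)_{2n}$ diverges as $d\to0$, and only the full combination with $(a^2b^2c^2d^2)^n$ has a finite limit. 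I expect this bookkeeping to be the main obstacle: one must check that the powers of $d$ cancel to $d^0$, that the powers of $q$ collapse from $-6\binom{n}{2}$ to $-4\binom{n}{2}$ with a residual $q^{-n}$ that supplies the $1/q^n$ in $\left(\tfrac{a^2b^2c^2}q\right)^n$, and — crucially — that the signs contributed by the several divergent factors combine correctly, since the factor $(\tfrac1{qabcd};q)_n$ alone contributes a $(-1)^n$; the signs must therefore be tracked with care so that the limit matches the normalization in \eqref{orthogcdqiHdisc}.

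Finally I would justify exchanging $\lim_{d\to0}$ with the bilateral summation. For this I reuse the tail estimate from the proof of Theorem~\ref{thm11}, namely ${\sf A}_{k,n,n}(\alpha;a,b,c,d|q)\sim{\sf B}_n^{\pm}(\alpha;{\bf a}|q)\,(q^{1-2n}abcd)^{|k|}$ as $k\to\pm\infty$. Fixing a small $\varepsilon>0$ and restricting to $|d|\le\varepsilon$, the summand is dominated by a constant multiple of $r^{|k|}$ with $r:=|q|^{1-2n}|abc|\,\varepsilon<1$, a summable bound uniform in $d$; dominated convergence then permits taking the limit termwise, so the limit of the left-hand side of \eqref{orthogqiAWdisc} equals the left-hand side of \eqref{orthogcdqiHdisc}, while the limit of the right-hand side yields the stated constant. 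The case $m\ne n$ inherits its vanishing directly from the $\delta_{m,n}$ already present in \eqref{orthogqiAWdisc}, completing the proof.
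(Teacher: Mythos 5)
Your route is exactly the paper's: its entire proof of this theorem reads ``Start with \eqref{orthogqiAWdisc} and taking the limit as $d\to 0$ completes the proof,'' and you are supplying the details of that limit. Your treatment of the polynomial limit from \eqref{qiAWPP2}, of the discrete weight for both signs of $k$, of the infinite-product prefactor, and your dominated-convergence justification for interchanging $\lim_{d\to0}$ with the bilateral sum are all sound in outline (two small remarks: identifying your limiting ${}_3\phi_2$ with the printed representations of $\bm{\mathsf p}_n[z;a,b,c|q]$ needs a Sears-type transformation, or one can instead take $d\to0$ directly in the definitions \eqref{qiAW}--\eqref{cdqiH}; and the $d$-uniformity of your geometric bound for $k<0$ rests on the cancellation of $d^{k}$ from $(qabcd)^k$ against the divergent factor $(\frac{\alpha}{d};q)_k$, which deserves a line of proof).

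However, there is a genuine gap at the one step you explicitly leave unexecuted, and it is not routine bookkeeping: carried out with Theorem \ref{thm11} exactly as printed, the limit does \emph{not} match \eqref{orthogcdqiHdisc}. Indeed, as $d\to0$,
\begin{equation*}
\left(-\tfrac{1}{ad},-\tfrac{1}{bd},-\tfrac{1}{cd};q\right)_n\sim\frac{q^{3\binom{n}{2}}}{(abc)^nd^{3n}},\qquad
\left(\tfrac{1}{qabcd};q\right)_{2n}\sim\frac{q^{\binom{2n}{2}-2n}}{(abcd)^{2n}},
\end{equation*}
\begin{equation*}
\left(\tfrac{1}{abcd};q\right)_{2n}\sim\frac{q^{\binom{2n}{2}}}{(abcd)^{2n}},\qquad
\left(\tfrac{1}{qabcd};q\right)_{n}\sim\frac{(-1)^nq^{\binom{n}{2}-n}}{(abcd)^{n}},
\end{equation*}
so the only surviving sign is the $(-1)^n$ from the last factor (the $(-1)^{2n}$'s are $1$), and it sits in the denominator uncompensated. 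The limit of the $n$-dependent constant in \eqref{orthogqiAWdisc} is therefore
\begin{equation*}
q^{-4\binom{n}{2}}\left(-\frac{a^2b^2c^2}{q}\right)^n\left(q,-\tfrac{1}{ab},-\tfrac{1}{ac},-\tfrac{1}{bc};q\right)_n,
\end{equation*}
which differs from the right-hand side of \eqref{orthogcdqiHdisc} by $(-1)^n$. The resolution is that the constant in \eqref{orthogqiAWdisc} should read $(-a^2b^2c^2d^2)^n$, as it does in the Askey/Ismail--Masson orthogonality quoted in the introduction and in \eqref{bilat4AW}, which restates the same relation; with that corrected input your computation closes and yields \eqref{orthogcdqiHdisc} exactly. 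So you correctly identified the sign tracking as the crux, but a complete proof must actually carry it out: that is where the claimed constant is produced, and doing so is what reveals (and forces one to repair) the sign inconsistency in the starting relation.
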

\begin{proof}
Start with \eqref{orthogqiAWdisc} and taking the limit as $d\to 0$ completes the proof.
\end{proof}

\begin{thm}
\label{thm13}
Let $m,n\in\N_0$, $q\in\CCdag$, $\alpha,a,b\in\CCast$. Then the $q^{-1}$-Al-Salam--Chihara polynomials satisfy the following infinite discrete bilateral orthogonality relation: 
\begin{eqnarray}
&&\hspace{-5.3cm}\sum_{k=-\infty}^\infty (1+q^{2k}\alpha^2)\bm{\mathsf Q}_m[q^k\alpha;{\bf a}|q]
\bm{\mathsf Q}_n[q^k\alpha;{\bf a}|q]
\frac{(\frac{\alpha}{\mathbf a};q)_k}{(-q\alpha {\mathbf a};q)_k}q^{2\binom{k}{2}}(q\alpha^2 ab)^k\nonumber\\
&&\hspace{-4.4cm}=
\frac{(q,-\alpha^2,-\frac{q}{\alpha^2},-qab;q)_\infty}{(-q\alpha\mathbf a,\frac{q{\mathbf a}}{\alpha};q)_\infty}q^{-2\binom{n}{2}}
\left(\frac{ab}{q}\right)^n
(q,-\frac{1}{ab};q)_n\delta_{m,n}.
\label{orthogcdqiASCdisc}
\end{eqnarray}
\end{thm}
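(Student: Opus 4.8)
The plan is to derive \eqref{orthogcdqiASCdisc} by letting $c\to 0$ in the continuous dual $q^{-1}$-Hahn relation \eqref{orthogcdqiHdisc} of Theorem \ref{thm12}, in direct analogy with the way Theorem \ref{thm12} was obtained from the $q^{-1}$-Askey--Wilson relation \eqref{orthogqiAWdisc} by letting $d\to 0$. Here $\mathbf a=\{a,b,c\}$ before the limit and $\mathbf a=\{a,b\}$ afterwards, and the Kronecker delta $\delta_{m,n}$ survives by continuity, so it suffices to track the convergence of the polynomial factors and the $c$-asymptotics of the remaining pieces of the summand and of the right-hand constant.

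First I would record the pointwise limit $\lim_{c\to 0}\bm{\mathsf p}_n[z;a,b,c|q]=\bm{\mathsf Q}_n[z;a,b|q]$. By the definitions \eqref{cdqiH} and \eqref{qiASC} this is $i^{-n}\lim_{c\to 0}p_n[iz;ia,ib,ic|q^{-1}]=i^{-n}Q_n[iz;ia,ib|q^{-1}]$, i.e.\ the classical continuous dual $q$-Hahn to Al-Salam--Chihara confluence (a denominator parameter of the ${}_3\phi_2$ representation tending to $0$) applied with base $q^{-1}$; crucially it carries no compensating power of $c$. Setting $z=q^k\alpha$ gives, for each fixed $k$, the convergence of the two polynomial factors in \eqref{orthogcdqiHdisc} to those in \eqref{orthogcdqiASCdisc}.

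Next I would handle the $c$-dependent weight using the elementary asymptotics $(\tfrac{\alpha}{c};q)_k\sim(-\tfrac{\alpha}{c})^kq^{\binom{k}{2}}$ as $c\to 0$, valid for every $k\in\Z$ through the extension $(x;q)_{-m}=1/(xq^{-m};q)_m$, together with $(-q\alpha c;q)_k\to(0;q)_k=1$. These give
\begin{equation*}
\frac{(\tfrac{\alpha}{c};q)_k}{(-q\alpha c;q)_k}\,q^{\binom{k}{2}}(-q\alpha abc)^k\;\longrightarrow\;q^{2\binom{k}{2}}(q\alpha^2ab)^k,
\end{equation*}
which is exactly the $q^{2\binom{k}{2}}(q\alpha^2ab)^k$ factor of \eqref{orthogcdqiASCdisc}, while the leftover ratio is the $\mathbf a=\{a,b\}$ form of $(\tfrac{\alpha}{\mathbf a};q)_k/(-q\alpha\mathbf a;q)_k$. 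On the right-hand side, $(-qac;q)_\infty$, $(-qbc;q)_\infty$, $(-q\alpha c;q)_\infty$ and $(\tfrac{qc}{\alpha};q)_\infty$ each tend to $1$, trimming the infinite products to their Theorem \ref{thm13} form, and in the $n$-dependent constant one combines
\begin{equation*}
\left(\tfrac{a^2b^2c^2}{q}\right)^n(-\tfrac{1}{ac};q)_n(-\tfrac{1}{bc};q)_n\;\sim\;q^{2\binom{n}{2}-n}(ab)^n,
\end{equation*}
so that with the prefactor $q^{-4\binom{n}{2}}$ one recovers $q^{-2\binom{n}{2}}(\tfrac{ab}{q})^n(q,-\tfrac{1}{ab};q)_n$, matching \eqref{orthogcdqiASCdisc}.

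I expect the only genuinely delicate point to be the interchange of the limit $c\to 0$ with the bilateral sum over $k\in\Z$. As in the proof of Theorem \ref{thm21}, the summand of \eqref{orthogcdqiHdisc} decays two-sidedly like a geometric series in $|k|$; what is needed is a bound of this geometric type that is uniform for $c$ in a punctured neighborhood of $0$, after which dominated convergence yields the termwise limits computed above and hence \eqref{orthogcdqiASCdisc}. The algebraic bookkeeping is routine; securing this uniform domination is the step I would budget the most care for.
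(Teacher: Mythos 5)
Your proposal follows exactly the paper's route: the paper's proof is the one-line statement that taking the limit $c\to 0$ in \eqref{orthogcdqiHdisc} of Theorem \ref{thm12} yields \eqref{orthogcdqiASCdisc}, which is precisely your plan, and your termwise asymptotics (the confluence $\bm{\mathsf p}_n \to \bm{\mathsf Q}_n$, the factor $(\tfrac{\alpha}{c};q)_k q^{\binom{k}{2}}(-q\alpha abc)^k \to q^{2\binom{k}{2}}(q\alpha^2 ab)^k$, and the collapse of the $c$-dependent constants on the right) are all correct. The only difference is that you supply the bookkeeping and flag the dominated-convergence step needed to interchange the limit with the bilateral sum, which the paper passes over in silence.
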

\begin{proof}
Start with \eqref{orthogcdqiHdisc} and taking the limit as $c\to 0$ completes the proof.
\end{proof}

\begin{thm}
\label{thm14}
Let $m,n\in\N_0$, $q\in\CCdag$, $\alpha,a\in\CCast$. Then the continuous big $q^{-1}$-Hermite polynomials satisfy the following infinite discrete bilateral orthogonality relation: 
\begin{eqnarray}
&&\hspace{-5.1cm}\sum_{k=-\infty}^\infty (1+q^{2k}\alpha^2)\bm{\mathsf H}_m[q^k\alpha;{a}|q]
\bm{\mathsf H}_n[q^k\alpha;{a}|q]
\frac{(\frac{\alpha}{a};q)_k}{(-q\alpha {a};q)_k}q^{3\binom{k}{2}}(-q\alpha^3a)^k\nonumber\\
&&\hspace{-4.2cm}=
\frac{(q,-\alpha^2,-\frac{q}{\alpha^2};q)_\infty}{(-q\alpha a,\frac{q{a}}{\alpha};q)_\infty}
\frac{q^{-\binom{n}{2}}(q;q)_n}{q^n}
\delta_{m,n}.
\label{orthogcbqiHdisc}
\end{eqnarray}
\end{thm}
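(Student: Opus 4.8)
The plan is to obtain \eqref{orthogcbqiHdisc} from the $q^{-1}$-Al-Salam--Chihara orthogonality relation \eqref{orthogcdqiASCdisc} of Theorem \ref{thm13} by letting $b\to 0$, exactly as Theorems \ref{thm12} and \ref{thm13} were produced by the limits $d\to 0$ and $c\to 0$. The first task is to confirm the limit relation for the polynomials themselves. Writing the $q^{-1}$-Al-Salam--Chihara polynomial through its terminating $\qhyp31{}{}{}$ representation and using $(-\tfrac{1}{ab};q)_n\sim q^{\binom n2}(ab)^{-n}$ as $b\to 0$, the prefactor $q^{-\binom n2}(-b)^n(-\tfrac1{ab};q)_n$ tends to $(-\tfrac1a)^n$, while the ${}_3\phi_1$ reduces termwise to the ${}_3\phi_0$ of the continuous big $q^{-1}$-Hermite polynomials; hence $\lim_{b\to 0}\bm{\mathsf Q}_n[z;a,b|q]=\bm{\mathsf H}_n[z;a|q]$.

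Second, I would track the weight factors. I would isolate the $b$-dependent part of the summand, namely $\dfrac{(\tfrac{\alpha}{b};q)_k}{(-q\alpha b;q)_k}\,b^{k}$ (the $b^k$ coming from $(q\alpha^2ab)^k$). For $k\ge 0$ one has $(\tfrac{\alpha}{b};q)_k\sim(-\tfrac{\alpha}{b})^kq^{\binom k2}$ and $(-q\alpha b;q)_k\to1$, and for $k<0$ one uses $(x;q)_{-\ell}=1/(xq^{-\ell};q)_\ell$ to compute the analogous asymptotics; in both cases the limit combines with the surviving $b$-free factors to reproduce precisely $(1+q^{2k}\alpha^2)\,\bm{\mathsf H}_m[q^k\alpha;a|q]\,\bm{\mathsf H}_n[q^k\alpha;a|q]\,\dfrac{(\tfrac{\alpha}{a};q)_k}{(-q\alpha a;q)_k}\,q^{3\binom k2}(-q\alpha^3a)^k$, the summand on the left of \eqref{orthogcbqiHdisc}.

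Third, I would pass to the limit on the right-hand side of \eqref{orthogcdqiASCdisc}: the factors $(-qab;q)_\infty$, $(-q\alpha b;q)_\infty$ and $(\tfrac{qb}{\alpha};q)_\infty$ all tend to $1$, while $(\tfrac{ab}{q})^n(-\tfrac1{ab};q)_n\to q^{\binom n2}/q^n$ again by the asymptotics of $(-\tfrac1{ab};q)_n$; together with the $q^{-2\binom n2}$ prefactor this yields the scalar $q^{-\binom n2}(q;q)_n/q^n$ and the surviving denominator $(-q\alpha a,\tfrac{qa}{\alpha};q)_\infty$, matching the right side of \eqref{orthogcbqiHdisc} term for term.

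The one genuinely analytic point---and the step I expect to be the main obstacle---is justifying the interchange of the limit $b\to 0$ with the bilateral sum over $k$. Here I would lean on the decay estimates already developed in the proof of Theorem \ref{thm21}: after the successive reductions $d\to 0$ and $c\to 0$ the surviving $q^{3\binom k2}$ factor endows the summand with Gaussian (super-geometric) decay in $|k|$, which is precisely why, in contrast with the four-parameter case, no smallness restriction on the parameters is required. Establishing a single summable bound valid uniformly for $b$ in a punctured neighborhood of $0$ then legitimizes dominated convergence; the remaining termwise computations in the three steps above are routine $q$-shifted-factorial algebra.
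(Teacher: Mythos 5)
Your proposal is correct and follows exactly the paper's route: the paper's own proof of this theorem consists precisely of taking the limit $b\to 0$ in the $q^{-1}$-Al-Salam--Chihara relation \eqref{orthogcdqiASCdisc} of Theorem \ref{thm13}, just as you do. The termwise limits of the polynomials, weight, and right-hand side, together with the dominated-convergence justification you supply, are details the paper leaves implicit, and they check out.
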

\begin{proof}
Start with \eqref{orthogcdqiASCdisc} and taking the limit as $b\to 0$ completes the proof.
\end{proof}

\begin{thm}
\label{thm15}
Let $m,n\in\N_0$, $q\in\CCdag$, $\alpha\in\CCast$. Then the continuous $q^{-1}$-Hermite polynomials satisfy the following infinite discrete bilateral orthogonality relation: 
\begin{eqnarray}
&&\hspace{-1.0cm}\sum_{k=-\infty}^\infty (1+q^{2k}\alpha^2)\bm{\mathsf H}_m[q^k\alpha|q]
\bm{\mathsf H}_n[q^k\alpha|q]
q^{4\binom{k}{2}}(q\alpha^4)^k
=(q,-\alpha^2,-\frac{q}{\alpha^2};q)_\infty
\frac{q^{-\binom{n}{2}}(q;q)_n}{q^n}\delta_{m,n}.
\label{orthogcqiHdisc}
\end{eqnarray}
\end{thm}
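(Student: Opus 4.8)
The plan is to obtain Theorem~\ref{thm15} from Theorem~\ref{thm14} by letting $a\to 0$ in \eqref{orthogcbqiHdisc}, exactly as each theorem in this section was produced from its predecessor. The first ingredient is the degeneration of the polynomials themselves. Since $\bm{\mathsf H}_n[z;a|q]=i^{-n}H_n[iz;ia|q^{-1}]$ and $\bm{\mathsf H}_n[z|q]=i^{-n}H_n[iz|q^{-1}]$ by \eqref{cbqiH}--\eqref{cqiH}, the standard limit from the continuous big $q$-Hermite to the continuous $q$-Hermite polynomials (applied at base $q^{-1}$ and parameter $ia$) yields, for each fixed $k\in\Z$ and $n\in\N_0$,
\[
\lim_{a\to 0}\bm{\mathsf H}_n[q^k\alpha;a|q]=\bm{\mathsf H}_n[q^k\alpha|q].
\]

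Next I would show the three remaining $a$-dependent factors collapse onto the kernel and constant of \eqref{orthogcqiHdisc}. Using the elementary asymptotics $(\tfrac{\alpha}{a};q)_k\sim(-\alpha/a)^k q^{\binom{k}{2}}$ (valid for all $k\in\Z$) together with $(-q\alpha a;q)_k\to 1$ as $a\to 0$, the summand weight obeys
\[
\frac{(\frac{\alpha}{a};q)_k}{(-q\alpha a;q)_k}q^{3\binom{k}{2}}(-q\alpha^3a)^k\longrightarrow q^{4\binom{k}{2}}(q\alpha^4)^k,
\]
which is precisely the kernel appearing in \eqref{orthogcqiHdisc}. On the right-hand side, $(-q\alpha a,\tfrac{qa}{\alpha};q)_\infty\to 1$, so the normalization constant tends to $(q,-\alpha^2,-\tfrac{q}{\alpha^2};q)_\infty\,q^{-\binom{n}{2}}(q;q)_n/q^n$, matching the claimed total constant. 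These are routine verifications once the pochhammer asymptotics above are recorded.

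The only genuine analytic content is the interchange of the limit $a\to 0$ with the doubly-infinite sum over $k$, which I would justify by dominated convergence. Here the Gaussian-type factor $q^{3\binom{k}{2}}$ (tending to $q^{4\binom{k}{2}}$) decays super-geometrically in $|k|$ and dominates both the polynomial growth, which is $O(q^{-n|k|})$ on the lattice $\{q^k\alpha\}$, and the remaining $a$-dependent factors uniformly for $a$ near $0$; in contrast to Theorem~\ref{thm21}, no constraint on the parameters is needed because of this stronger decay, consistent with the unconditional hypotheses of \eqref{orthogcqiHdisc}. The step I expect to require the most care is the term-by-term polynomial limit and its uniform domination: the prefactor $(-1/a)^n$ in $\bm{\mathsf H}_n[\cdot;a|q]$ diverges while the leading coefficient of its ${}_3\phi_0$ representation vanishes (the $q$-binomial theorem gives $(1;q)_n=0$ for $n\ge 1$), so the finite limit emerges only from a subleading cancellation that must be controlled uniformly in $k$ on the support. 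Once a single summable dominating function is secured by comparison with the super-geometrically decaying limiting summand, passing to the limit in \eqref{orthogcbqiHdisc} and assembling the three limits above yields \eqref{orthogcqiHdisc}, completing the proof.
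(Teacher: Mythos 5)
Your proposal follows exactly the paper's route: the paper's proof of Theorem~\ref{thm15} is the one-line observation that one starts with \eqref{orthogcbqiHdisc} and takes the limit $a\to 0$, which is precisely your plan. Your additional details --- the Pochhammer asymptotics $(\frac{\alpha}{a};q)_k\sim(-\alpha/a)^kq^{\binom{k}{2}}$ valid for all $k\in\Z$, the term-by-term polynomial limit $\bm{\mathsf H}_n[\cdot;a|q]\to\bm{\mathsf H}_n[\cdot|q]$, and the dominated-convergence justification for interchanging the limit with the bilateral sum --- are correct refinements of the same argument, supplying rigor the paper leaves implicit.
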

\begin{proof}
Start with \eqref{orthogcbqiHdisc} and taking the limit as $a\to 0$ completes the proof.
\end{proof}

\section{Infinite continuous bilateral orthogonality relations}

\medskip
\noindent I've been thinking about extensions of \cite[Theorem 21.6.4]{Ismail:2009:CQO} for $w_2(x)$ for the symmetric families with parameters $a,b,c,d$. Recall the definition of the orthogonality measure for continuous $q^{-1}$-Hermite polynomials $w_2(x)$ \cite[(21.6.4)]{Ismail:2009:CQO}, namely
\begin{equation}
\hspace{0.95cm}w_2(x)=\exp\left(-\frac{2}{\log q^{-1}}\left[\log(x+\sqrt{x^2+1})\right]^2\right).
\end{equation}
If you choose $x=\frac12(z-z^{-1})$, then the above definition reduces to 
\begin{equation}
\hspace{0.95cm}w_2(x)=\exp\left(-\frac{2\left(\log z\right)^2}{\log q^{-1}}\right),
\end{equation}
for the polynomials $h_n(x|q)$ which are 
defined in \cite[(21.2.5)]{Ismail:2009:CQO}. These polynomials
are related to the standard continuous $q^{-1}$-Hermite polynomials using \cite[(21.2.1)]{Ismail:2009:CQO}
\begin{equation}
\hspace{0.95cm}h_n(x|q)=\bm{\mathsf H}(x|q)=i^{-n}H_n[iz|q^{-1}],
\end{equation}
where $x=\frac12(z-1/z)$. In terms of the polynomials $\bm{\mathsf H}_n[z|q]$, the equivalent orthogonality relation is 
\cite[(21.7.7)]{Ismail:2009:CQO}
\begin{eqnarray}
&&\int_{-\infty}^\infty
(q^{x}+q^{-x})\bm{\mathsf H}_m[q^x|q]\bm{\mathsf H}_n[q^x|q]\exp\left(-2x^2\log q^{-1}\right)\,\dd x=q^{-\binom{n}{2}}\frac{(q;q)_n}{q^{n+\frac18}}\sqrt{\frac{2\pi}{\log q^{-1}}}\delta_{m,n},
\label{OR1}
\end{eqnarray}
which has a positive definite measure of orthogonality for $q\in(0,1)$.
Define 
\begin{equation}
\hspace{0.95cm}\omega_2(x):=\exp\left(-2x^2\log q^{-1}\right)=q^{2x^2},
\end{equation}
which follows using the laws of logarithms.
Pulling out a factor of $q^{-x}$ in the prefactor of \eqref{OR1}, one obtains the following equivalent expression for the continuous  orthogonality relation for continuous $q^{-1}$-Hermite polynomials, namely
\begin{eqnarray}
\int_{-\infty}^\infty
(1+q^{2x})\bm{\mathsf H}_m[q^x|q]\bm{\mathsf H}_n[q^x|q]q^{2x^2-x}\,\dd x=q^{-\binom{n}{2}}\frac{(q;q)_n}{q^{n+\frac18}}\sqrt{\frac{2\pi}{\log q^{-1}}}\delta_{m,n}.
\label{contI}
\end{eqnarray}
Cursory observation clearly identifies the corresponding infinite discrete bilateral orthogonality relation given by 
\begin{equation}
\sum_{k=-\infty}^\infty (1-q^{2k}\alpha^2)
H_m[q^k\alpha|q^{-1}]H_n[q^k\alpha|q^{-1}]q^{2k^2-k}\alpha^4=(q,\alpha^2,\frac{q}{\alpha^2};q)_\infty
\frac{q^{-\binom{n}{2}}(q;q)_n}{(-q)^n}\delta_{m,n},
\end{equation}
which after the replacement $\alpha\mapsto i\alpha$, one obtains
\begin{equation}
\sum_{k=-\infty}^\infty (1+q^{2k}\alpha^2)
\bm{\mathsf H}_m[q^k\alpha|q]\bm{\mathsf H}_n[q^k\alpha|q]q^{2k^2-k}\alpha^4=(q,-\alpha^2,-\frac{q}{\alpha^2};q)_\infty
\frac{q^{-\binom{n}{2}}(q;q)_n}{q^n}\delta_{m,n},
\label{OR2}
\end{equation}
which is clearly the infinite discrete bilateral analogue of \eqref{contI} for $\alpha=1$ and is corresponds to a positive definite orthogonality measure for $q\in(0,1)$. This can be seen after replacing $\alpha\mapsto 1$, 
$k\mapsto x$ and replacing the infinite bilateral sum over $k\in\Z$ with a continuous integral over $x\in\R$, or otherwise, using the methods described in \cite{IsmailRahman1995}.

\medskip
\noindent Now we can use the methods in \cite{IsmailRahman1995} to obtain precisely the correspondence between the infinite discrete bilateral orthogonality relation \eqref{OR2} and the infinite continuous orthogonality relation given in \eqref{contI}. Both orthogonality relations having already been established in \cite{Ismail:2009:CQO} and \cite{IsmailZhangZhou2022}. This leads us to the following correspondence theorem.

\begin{thm} Let $m,n\in\N_0$, $q\in\CCdag$, $\alpha\in\CCast$ and define
\begin{eqnarray}
&&\hspace{-5cm}\Psi_{m,n}(\alpha|q):=\sum_{k=-\infty}^\infty 
(1+q^{2k}\alpha^2)\bm{\mathsf H}_n[q^k\alpha|q]
\bm{\mathsf H}_m[q^k\alpha|q]q^{2k^2-k}\alpha^{4k}\nonumber\\
&&\hspace{-2.95cm}=(q,-\alpha^2,-\frac{q}{\alpha^2};q)_\infty
\frac{q^{-\binom{n}{2}}(q;q)_n}{q^n}\delta_{m,n}.
\label{bilat}
\end{eqnarray}
Then
\begin{eqnarray}
&&\hspace{-2.6cm}{\sf K}_{m,n}(\alpha|q):=\int_{-\infty}^\infty
(1+q^{2x}\alpha^2)\bm{\mathsf H}_n[q^{x}\alpha|q]
\bm{\mathsf H}_m[q^{x}\alpha|q]
q^{2x^2-x}\alpha^{4x}\,\dd x\nonumber\\
&&\hspace{-0.6cm}=\int_{0}^1 \Psi_{m,n}(q^x\alpha|q)q^{2x^2-x}\alpha^{4x}\,\dd x\nonumber\\
&&\hspace{-0.6cm}=(q;q)_\infty
\frac{q^{-\binom{n}{2}}(q;q)_n\delta_{m,n}}{q^n}
{\sf J}(\alpha|q)\nonumber\\
&&\hspace{-0.6cm}:=(q;q)_\infty
\frac{q^{-\binom{n}{2}}(q;q)_n\,\delta_{m,n}}{q^n}
\int_0^1(-q^{2x}\alpha^2,-\frac{q^{1-2x}}{\alpha^2};q)_\infty
q^{2x^2-x}\alpha^{4x}\,\dd x,
\label{corres1}
\end{eqnarray}
where
\begin{eqnarray}
&&\hspace{-3.0cm}{\sf J}(\alpha|q)=  \int_0^1(-q^{2x}\alpha^2,-\frac{q^{1-2x}}{\alpha^2};q)_\infty
q^{2x^2-x}\alpha^{4x}\,\dd x\nonumber\\
&&\hspace{-1.8cm}=\frac{1}{(q;q)_\infty}
\int_{-\infty}^\infty(1+q^{2x}\alpha^2)q^{2x^2-x}\alpha^{4x}\,\dd x=
\frac{\sqrt{2\pi}\,\alpha \exp\left(\frac{2(\log \alpha)^2}{\log q^{-1}}\right)}
{q^\frac18\sqrt{\log q^{-1}}(q;q)_\infty}.
\label{Jint}
\end{eqnarray}
\end{thm}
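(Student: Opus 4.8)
The plan is to establish the chain \eqref{corres1} by an unfolding (periodization) of the real line into unit cells, which converts the continuous integral ${\sf K}_{m,n}$ into an integral over $[0,1]$ of the bilateral sum $\Psi_{m,n}$, and then to evaluate the remaining scalar integral ${\sf J}(\alpha|q)$ of \eqref{Jint} as a sum of two Gaussian integrals.

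First I would prove the opening equality of \eqref{corres1}. Writing the integration variable as $x=k+t$ with $k\in\Z$ and $t\in[0,1)$ partitions $\R$ into unit intervals, so that, denoting by $g(x)$ the integrand of ${\sf K}_{m,n}$,
\[
{\sf K}_{m,n}(\alpha|q)=\int_0^1\sum_{k=-\infty}^\infty g(k+t)\,\dd t .
\]
Setting $\beta:=q^t\alpha$ gives $q^{k+t}\alpha=q^k\beta$, so the factor $1+q^{2(k+t)}\alpha^2=1+q^{2k}\beta^2$, and the Hermite factors $\bm{\mathsf H}_n[q^{k+t}\alpha|q]$, $\bm{\mathsf H}_m[q^{k+t}\alpha|q]$ reduce to $\bm{\mathsf H}_n[q^k\beta|q]$, $\bm{\mathsf H}_m[q^k\beta|q]$, exactly as in the summand of $\Psi_{m,n}(\beta|q)$. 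The only computation required is for the Gaussian weight: expanding $2(k+t)^2-(k+t)$ and comparing $q^{2(k+t)^2-(k+t)}\alpha^{4(k+t)}$ against the summand weight $q^{2k^2-k}\beta^{4k}$ of $\Psi_{m,n}(\beta|q)$, the quotient collapses to $q^{2t^2-t}\alpha^{4t}$, which is independent of $k$ and pulls out of the sum. Hence $\sum_k g(k+t)=\Psi_{m,n}(q^t\alpha|q)\,q^{2t^2-t}\alpha^{4t}$, giving the first line.

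Next I would substitute the closed form of $\Psi_{m,n}$ from \eqref{bilat} with $\alpha$ replaced by $q^t\alpha$: this turns $(-\alpha^2,-\tfrac{q}{\alpha^2};q)_\infty$ into $(-q^{2t}\alpha^2,-\tfrac{q^{1-2t}}{\alpha^2};q)_\infty$, while the $t$-independent factor $(q;q)_\infty q^{-\binom{n}{2}}(q;q)_n q^{-n}\delta_{m,n}$ comes out of the integral, leaving precisely ${\sf J}(\alpha|q)$ as defined in \eqref{Jint}. The same unfolding applied to the trivial case $m=n=0$, where the Hermite polynomials equal $1$ and $\Psi_{0,0}(\beta|q)=(q,-\beta^2,-\tfrac{q}{\beta^2};q)_\infty$, yields the middle equality of \eqref{Jint}, namely ${\sf J}(\alpha|q)=(q;q)_\infty^{-1}\int_{-\infty}^\infty(1+q^{2x}\alpha^2)q^{2x^2-x}\alpha^{4x}\,\dd x$.

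Finally I would evaluate this last integral. Writing $q=\mathrm{e}^{-\log q^{-1}}$ splits it into two Gaussian integrals, one from the summand $1$ and one from $q^{2x}\alpha^2$. Completing the square in each via $\int_{-\infty}^\infty\mathrm{e}^{-Ax^2+Bx}\,\dd x=\sqrt{\pi/A}\,\mathrm{e}^{B^2/(4A)}$ with $A=2\log q^{-1}$, the two exponents differ only in the sign of a term linear in $\log\alpha$, and after simplification the two contributions turn out to be equal, each equal to $\alpha\,q^{-1/8}\exp\!\big(2(\log\alpha)^2/\log q^{-1}\big)\sqrt{\pi/(2\log q^{-1})}$; their sum, using $2\sqrt{\pi/(2\log q^{-1})}=\sqrt{2\pi/\log q^{-1}}$ and dividing by $(q;q)_\infty$, gives the value stated in \eqref{Jint}. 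The one genuine point of rigor is the interchange of summation and integration in the unfolding step; since for $q\in(0,1)$ the super-Gaussian decay of $q^{2x^2}$ dominates both the exponential factor $|\alpha|^{4x}$ and the polynomial growth of the Hermite polynomials, every series and integral involved converges absolutely and Fubini's theorem applies. I expect this interchange, rather than any single computation, to be the main thing to state carefully.
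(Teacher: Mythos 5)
Your proposal is correct and follows essentially the same route as the paper: your unfolding $x=k+t$ is precisely the Ismail--Rahman periodization method the paper invokes (with the unit-periodic weight $\omega(x;q)\equiv 1$), the middle equality of \eqref{Jint} is obtained in both cases from the $(m,n)=(0,0)$ specialization, and your evaluation of ${\sf J}(\alpha|q)$ by splitting into two Gaussians and completing the square matches the paper's computation via the substitution $q^x=\expe^{-y}$ and the Gaussian integral. The only difference is one of presentation: you write out the periodization and the Fubini justification explicitly, whereas the paper cites the method and leaves those details implicit.
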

\begin{proof}
The infinite discrete bilateral orthogonality relation for continuous $q^{-1}$-Hermite polynomials \eqref{bilat} is known, namely \eqref{orthogcqiHdisc}, with the finite $q$-shifted factorials rewritten as infinite $q$-shifted factorials. Using 
the methods in \cite{IsmailRahman1995} one has 
\begin{eqnarray}
&&\hspace{-3.6cm}\int_{-\infty}^\infty
(1+q^{2x}\alpha^2)\bm{\mathsf H}_n[q^{x}\alpha|q]
\bm{\mathsf H}_m[q^{x}\alpha|q]
q^{2x^2-x}\alpha^{4x}\omega(x;q)\,\dd x\nonumber\\
&&\hspace{-2.6cm}=\int_{0}^1 \Psi_{m,n}(q^x\alpha|q)q^{2x^2-x}\alpha^{4x}\omega(x;q)\,\dd x\nonumber\\
&&\hspace{-2.6cm}=(q;q)_\infty
\frac{q^{-\binom{n}{2}}(q;q)_n}{q^n}
\int_0^1(-q^{2x}\alpha^2,-\frac{q^{1-2x}}{\alpha^2};q)_\infty
q^{2x^2-x}\alpha^{4x}\omega(x;q)\,\dd x,
\end{eqnarray}
where $\omega(x;q)$ is unit-periodic on 
$x\in\R$. Choosing $\omega(x;q)\equiv 1$ provides correspondence \eqref{corres1}.
The integral ${\sf J}(\alpha|q)$ and its relation with $K_{0,0}(\alpha|q)$, namely  
\begin{equation}
K_{0,0}(\alpha|q)=(q;q)_\infty {\sf J}(\alpha|q),
\label{rel1}
\end{equation}
is clear by setting $(m,n)=(0,0)$ in 
\eqref{corres1}.
We now take advantage of \eqref{rel1} to compute ${\sf J}(\alpha|q)$.
By 
starting with the second integral in
\eqref{Jint}, 
make the substitution 
$q^{x}=\expe^{-y}$
completing the square in the power of the exponential, making standard substitutions, and relying
on the integral
\begin{equation}
\int_{-\infty}^\infty \expe^{-x^2}\alpha^{ax}\,\dd x=\sqrt{\pi}\exp(\tfrac14 a^2(\log \alpha)^2),
\end{equation}
if $\Re a\log\alpha<0$, which is just a re-written form of the Gaussian integral, completes the proof.
\end{proof}

\noindent An important special case of this integral is given by 
\begin{eqnarray}
&&\hspace{-1cm}{\sf J}(1|q)=\int_0^1(-q^{2x},-q^{1-2x};q)_\infty q^{2x^2-x}\,\dd x
=\frac{1}{(q;q)_\infty}
\int_{-\infty}^\infty(1+q^{2x})q^{2x^2-x}\,\dd x
=\frac{q^{-\frac18}}{(q;q)_\infty}\sqrt{\frac{2\pi}{\log q^{-1}}}.
\label{J1q}
\end{eqnarray}

\noindent However, we already know the continuous orthogonality relation corresponding to the left-hand side of \eqref{corres1} 
with $\alpha=1$, namely
\eqref{contI}. Therefore we are able to solve for the $\alpha=1$ special case of the following integrals
\begin{eqnarray}
&&\hspace{-5cm}{\sf K}_{0,0}(1|q)=\int_{-\infty}^\infty (1+q^{2x})q^{2x^2-x}\,\dd x=q^{-\frac18}\sqrt{\frac{2\pi}{\log q^{-1}}},\\
&&\hspace{-5cm}{\sf J}(1|q)=\int_0^1(-q^{2x},-q^{1-2x};q)_\infty q^{2x^2-x}\,\dd x=\frac{q^{-\frac18}}{(q;q)_\infty}\sqrt{\frac{2\pi}{\log q^{-1}}}.
\label{J1q}
\end{eqnarray}
Since we know ${\sf J}(1|q)$, we can use it to derive several items for the continuous big $q^{-1}$-Hermite polynomials. First is a correspondence result.
\begin{thm} 
\label{thm02}
Let $n,m\in\N_0$, $q\in\CCdag$, $\alpha,a\in\CCast$ and define
\begin{eqnarray}
&&\hspace{-1.5cm}\Psi_{m,n}(\alpha;a|q)
:=\sum_{k=-\infty}^\infty 
(1+q^{2k}\alpha^2)\bm{\mathsf H}_n[q^k\alpha;a|q]
\bm{\mathsf H}_m[q^k\alpha;a|q]
(-q^{k+1}\alpha a,\frac{q^{1-k}a}{\alpha};q)_\infty q^{2k^2-k}\alpha^{4k}\nonumber\\
&&\hspace{0.9cm}=\frac{(q,-\alpha^2,-\frac{q}{\alpha^2};q)_\infty}{(-q\alpha a,\frac{qa}{\alpha};q)_\infty}
\frac{q^{-\binom{n}{2}}(q;q)_n}{q^n}\delta_{m,n}.
\label{bilat2}
\end{eqnarray}
Then
\begin{eqnarray}
&&\hspace{-1.6cm}{\sf K}_{m,n}(\alpha;a|q):=\int_{-\infty}^\infty
(1+q^{2x}\alpha^2)\bm{\mathsf H}_n[q^{x}\alpha;a|q]
\bm{\mathsf H}_m[q^{x}\alpha;a|q]
(-q^{x+1}\alpha a,\frac{q^{1-x}a}{\alpha};q)_\infty 
q^{2x^2-x}\alpha^{4x}\,\dd x\nonumber\\
&&\hspace{-0.2cm}=\int_{0}^1 \Psi_{m,n}(q^x\alpha;a|q)q^{2x^2-x}\alpha^{4x}\,\dd x\nonumber\\
&&\hspace{-0.2cm}=(q;q)_\infty
\frac{q^{-\binom{n}{2}}(q;q)_n\,\delta_{m,n}}{q^n}
\int_0^1(-q^{2x}\alpha^2,-\frac{q^{1-2x}}{\alpha^2};q)_\infty
q^{2x^2-x}\alpha^{4x}\,\dd x\nonumber\\
&&\hspace{-0.2cm}=\frac{\sqrt{2\pi}\,\alpha \exp\left(\frac{2(\log \alpha)^2}{\log q^{-1}}\right)}
{q^\frac18\sqrt{\log q^{-1}}}
\frac{q^{-\binom{n}{2}}(q;q)_n\delta_{m,n}}{q^n}
.\label{corres2}
\end{eqnarray}
\end{thm}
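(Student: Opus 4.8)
The plan is to prove the two assertions of the theorem in turn — first the discrete evaluation \eqref{bilat2}, then the integral identity \eqref{corres2} — using the periodization (Ismail--Rahman) device of the preceding correspondence theorem. For \eqref{bilat2} I would start from the discrete bilateral orthogonality \eqref{orthogcbqiHdisc} of Theorem~\ref{thm14} and recast its summation weight as an infinite-product weight. Using $(x;q)_k=(x;q)_\infty/(xq^k;q)_\infty$ (valid for every $k\in\Z$ with the usual convention for $k<0$) together with the theta quasi-periodicity
\[
\vartheta(q^k y;q)=(-y)^{-k}q^{-\binom{k}{2}}\vartheta(y;q),\qquad \vartheta(y;q):=(y,q/y;q)_\infty,
\]
applied to the factor $1/(\alpha q^k/a;q)_\infty$ with $y=\alpha/a$, one obtains the clean identity
\[
\frac{(\frac{\alpha}{a};q)_k}{(-q\alpha a;q)_k}\,q^{3\binom{k}{2}}(-q\alpha^3a)^k
=\frac{(-q^{k+1}\alpha a,\tfrac{q^{1-k}a}{\alpha};q)_\infty}{(-q\alpha a,\tfrac{qa}{\alpha};q)_\infty}\,q^{2k^2-k}\alpha^{4k}.
\]
Multiplying \eqref{orthogcbqiHdisc} through by the $k$-independent product $(-q\alpha a,\tfrac{qa}{\alpha};q)_\infty$ then turns its left-hand side into exactly the bilateral sum $\Psi_{m,n}(\alpha;a|q)$, which establishes \eqref{bilat2}. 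The one point needing care here is that the theta identity be applied with the correct sign and convention for negative indices $k$.

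To reach \eqref{corres2} I would periodize the real axis, $\int_{-\infty}^\infty g(x)\,\dd x=\int_0^1\sum_{k\in\Z}g(x+k)\,\dd x$, where $g$ is the integrand of ${\sf K}_{m,n}(\alpha;a|q)$. The decisive observation is that under $x\mapsto x+k$ every $q^x\alpha$-dependent factor of $g$ depends only on $q^k(q^x\alpha)$, while the Gaussian and power factors split as
\[
q^{2(x+k)^2-(x+k)}\alpha^{4(x+k)}=q^{2x^2-x}\alpha^{4x}\cdot q^{2k^2-k}(q^x\alpha)^{4k};
\]
in particular the $a$-dependent product $(-q^{x+1}\alpha a,\tfrac{q^{1-x}a}{\alpha};q)_\infty$ in $g$ becomes precisely the $a$-factor sitting in each summand of $\Psi$. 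Hence $\sum_{k}g(x+k)=q^{2x^2-x}\alpha^{4x}\,\Psi_{m,n}(q^x\alpha;a|q)$, which is the first equality of \eqref{corres2}. Because the value of $\Psi$ from the previous step carries no $a$-dependence, inserting it makes the $a$-factor disappear, which explains why the final answer is independent of $a$.

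Finally I would substitute $\Psi_{m,n}(q^x\alpha;a|q)$ (that is, \eqref{bilat2} with $\alpha\mapsto q^x\alpha$), pull the Kronecker delta and the $n$-dependent constant outside the integral, and split $(q;q)_\infty$ off from $(q,-q^{2x}\alpha^2,-\tfrac{q^{1-2x}}{\alpha^2};q)_\infty$ to arrive at the third line of \eqref{corres2}. The integral that remains is exactly ${\sf J}(\alpha|q)$ of \eqref{Jint}, already evaluated there by the substitution $q^x=\expe^{-y}$, completing the square, and the Gaussian integral; multiplying by $(q;q)_\infty$ then yields the closed form in the last line. I expect the main obstacle to be analytic rather than algebraic: rigorously justifying the interchange of summation and integration in the periodization step, which rests on the super-exponential $q^{2k^2}$ decay of the summands dominating both the theta-type growth of the $a$-factor and the exponential growth of $\bm{\mathsf H}_n[q^k\alpha;a|q]$ as $k\to\pm\infty$, in the spirit of the asymptotic bounds ${\sf B}_n^{\pm}$ appearing in the proof of Theorem~\ref{thm11}.
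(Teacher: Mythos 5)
Your proposal is correct and follows essentially the same route as the paper's proof: recognize \eqref{bilat2} as the discrete orthogonality \eqref{orthogcbqiHdisc} with the finite $q$-shifted factorials rewritten as infinite ones, periodize the line via the Ismail--Rahman device ($\int_{\R}g=\int_0^1\sum_k g(x+k)\,\dd x$, with the paper's unit-periodic $\omega(x;q)$ set to $1$), and evaluate the remaining integral by ${\sf J}(\alpha|q)$ from \eqref{Jint}; you merely make explicit the weight identity and the Gaussian-splitting step that the paper leaves to the citation. One bonus of your explicit computation: it shows $\Psi_{m,n}(\alpha;a|q)=(q,-\alpha^2,-\frac{q}{\alpha^2};q)_\infty\,\frac{q^{-\binom{n}{2}}(q;q)_n}{q^n}\delta_{m,n}$ with no $a$-dependence, so the factor $(-q\alpha a,\frac{qa}{\alpha};q)_\infty$ printed in the denominator of the right-hand side of \eqref{bilat2} is spurious (a typo in the statement), and only this $a$-free value is consistent with the third and fourth lines of \eqref{corres2}, exactly as your remark about the disappearance of the $a$-factor indicates.
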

\begin{proof}
The infinite discrete bilateral orthogonality relation for continuous big $q^{-1}$-Hermite polynomials \eqref{bilat2} is known, namely \eqref{orthogcbqiHdisc} with the finite $q$-shifted factorials rewritten as infinite $q$-shifted factorials. Using 
the methods in \cite{IsmailRahman1995} one has 
\begin{eqnarray}
&&\hspace{-2.6cm}\int_{-\infty}^\infty
(1+q^{2x}\alpha^2)\bm{\mathsf H}_n[q^{x}\alpha;a|q]
\bm{\mathsf H}_m[q^{x}\alpha;a|q]
(-q^{x+1}\alpha a,\frac{q^{1-x}a}{\alpha};q)_\infty 
q^{2x^2-x}\alpha^{4x}\omega(x;q)\,\dd x\nonumber\\
&&\hspace{-1.6cm}=\int_{0}^1 \Psi_{m,n}(q^x\alpha;a|q)q^{2x^2-x}\alpha^{4x}\omega(x;q)\,\dd x\nonumber\\
&&\hspace{-1.6cm}=(q;q)_\infty
\frac{q^{-\binom{n}{2}}(q;q)_n}{q^n}
\int_0^1(-q^{2x}\alpha^2,-\frac{q^{1-2x}}{\alpha^2};q)_\infty
q^{2x^2-x}\alpha^{4x}\omega(x;q)\,\dd x,
\end{eqnarray}
where $\omega(x;q)$ is unit-periodic on 
$x\in\R$. Choosing $\omega(x;q)\equiv 1$ 
and evaluating the integral \eqref{Jint} completes the proof.
\end{proof}

\noindent Second is a continuous analogue of the infinite discrete bilateral orthogonality relation for continuous big $q^{-1}$-Hermite polynomials. Note that the reason we are able to obtain this is because the norm of orthogonality is independent of the parameter $a$.

\begin{thm}Let $n,m\in\N_0$, $q\in\CCdag$, $a\in\CCdag$. Then
\begin{eqnarray}
&&\hspace{-0.9cm}\int_{-\infty}^\infty (1+q^{2x})\bm{\mathsf H}_n[q^x;a|q]
\bm{\mathsf H}_m[q^x;a|q]
(-q^{x+1}a,q^{1-x}a;q)_\infty q^{2x^2-x}\,\dd x
=\frac{q^{-\binom{n}{2}}(q;q)_n}{q^{n+\frac18}}\sqrt{\frac{2\pi}{\log q^{-1}}}\delta_{m,n}.
\end{eqnarray}
\end{thm}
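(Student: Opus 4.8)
The plan is to obtain this orthogonality relation as the single specialization $\alpha=1$ of the correspondence result in Theorem \ref{thm02}, so that essentially no new work is required beyond verifying that the substitution reproduces both sides. First I would set $\alpha=1$ throughout equation \eqref{corres2}. On the left-hand side the prefactor $1+q^{2x}\alpha^2$ becomes $1+q^{2x}$, the monomial $\alpha^{4x}$ becomes $1$, and the infinite $q$-shifted factorial $(-q^{x+1}\alpha a,\frac{q^{1-x}a}{\alpha};q)_\infty$ collapses to $(-q^{x+1}a,q^{1-x}a;q)_\infty$; thus ${\sf K}_{m,n}(1;a|q)$ is exactly the integral in the statement.

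Next I would evaluate the closed-form right-hand side of \eqref{corres2} at $\alpha=1$. The only nontrivial simplifications are $\log\alpha=0$, which sends the Gaussian-type factor $\exp\!\left(\frac{2(\log\alpha)^2}{\log q^{-1}}\right)$ to $1$, and the leading factor $\alpha$ to $1$. What remains is $\frac{\sqrt{2\pi}}{q^{\frac18}\sqrt{\log q^{-1}}}\cdot\frac{q^{-\binom{n}{2}}(q;q)_n\,\delta_{m,n}}{q^n}$, which I would regroup into $\frac{q^{-\binom{n}{2}}(q;q)_n}{q^{n+\frac18}}\sqrt{\frac{2\pi}{\log q^{-1}}}\,\delta_{m,n}$, matching the claimed right-hand side verbatim.

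Since Theorem \ref{thm02} has already carried out the analytic step---the passage from the infinite discrete bilateral sum to the integral via the unit-periodic averaging argument of \cite{IsmailRahman1995} and the evaluation of ${\sf J}(\alpha|q)$ in \eqref{Jint}---the present statement is a direct corollary and there is no genuine obstacle. The one point I would flag is the hypothesis $a\in\CCdag$ in place of the $a\in\CCast$ used in Theorem \ref{thm02}. I do not expect this to affect convergence: as $x\to+\infty$ one has $(-q^{x+1}a;q)_\infty\to 1$, while as $x\to-\infty$ the growth of $(q^{1-x}a;q)_\infty$ is overwhelmed by the super-Gaussian decay of $q^{2x^2-x}$ (for $q\in(0,1)$ this factor behaves like $e^{-cx^2}$), so the integral converges absolutely for any $a\in\CCast$. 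The stronger requirement $|a|<1$ should instead be what guarantees that the resulting measure is positive definite, so that the relation is a bona fide continuous orthogonality rather than merely a diagonal bilinear form.
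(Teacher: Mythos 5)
Your proposal is correct and coincides with the paper's own proof: the paper likewise obtains this statement by setting $\alpha=1$ in the correspondence theorem for the continuous big $q^{-1}$-Hermite polynomials, equation \eqref{corres2}, with the known value ${\sf J}(1|q)$ from \eqref{J1q} supplying the constant on the right-hand side. Your closing observation about the hypothesis $a\in\CCdag$ versus $a\in\CCast$ is a reasonable extra check but not a deviation from the paper's route.
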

\begin{proof}
Start with Theorem \ref{thm02} and set $\alpha=1$. In this case ${\sf J}(1|q)$ is known 
\eqref{J1q}, the orthogonality relation follows.
\end{proof}

\noindent Now we proceed directly to the correspondence theorem for the $q^{-1}$-Askey--Wilson polynomials.

\begin{thm} 
\label{thm02}
Let $n,m,N\in\N_0$, $q\in\CCdag$, $\alpha,a,b,c,d\in\CCast$, ${\bf a}$ be the multiset given by $\{a,b,c,d\}$, 
$n,m \le N$ such that  $|abcd|< |q|^{2N-1}$, and define
\begin{eqnarray}
&&\hspace{-1.2cm}\Psi_{m,n}(\alpha;{\mathbf a}|q)
:=\sum_{k=-\infty}^\infty 
(1+q^{2k}\alpha^2)\bm{\mathsf p}_n[q^k\alpha;{\mathbf a}|q]
\bm{\mathsf p}_m[q^k\alpha;{\mathbf a}|q]
(-q^{k+1}\alpha {\mathbf a},\frac{q^{1-k}}{\alpha}{\mathbf a};q)_\infty q^{2k^2-k}\alpha^{4k}\nonumber\\
&&\hspace{1.25cm}=\frac{(q,-\alpha^2,-\frac{q}{\alpha^2},-qab,-qac,-qad,-qbc,-qbd,-qcd;q)_\infty}{(qabcd;q)_\infty}\nonumber\\
&&\hspace{1.7cm}\times
\frac{q^{-6\binom{n}{2}}(-a^2b^2c^2d^2)^n(q,
-\frac{1}{ab},-\frac{1}{ac},-\frac{1}{ad},-\frac{1}{bc},-\frac{1}{bd},-\frac{1}{cd};q)_n(\frac{1}{qabcd};q)_{2n}
}{(\frac{1}{qabcd};q)_n(\frac{1}{abcd};q)_{2n}}
\delta_{m,n},
\label{bilat4AW}
\end{eqnarray}
which provides the infinite discrete bilateral orthogonality relation for the $q^{-1}$-Askey--Wilson polynomials.
Then one has the following continuous integral correspondence
\begin{eqnarray}
&&\hspace{-1.5cm}{\sf K}_{m,n}(\alpha;{\mathbf a}|q)
:=\int_{-\infty}^\infty
(1+q^{2x}\alpha^2)\bm{\mathsf p}_n[q^{x}\alpha;{\mathbf a}|q]
\bm{\mathsf p}_m[q^{x}\alpha;{\mathbf a}|q]
(-q^{x+1}\alpha {\mathbf a},\frac{q^{1-x}}{\alpha}{\mathbf a}
;q)_\infty 
q^{2x^2-x}\alpha^{4x}\,\dd x\nonumber\\
&&\hspace{0.9cm}=\int_{0}^1 \Psi_{m,n}(q^x\alpha;{\mathbf a}|q)q^{2x^2-x}\alpha^{4x}\,\dd x\nonumber\\
&&\hspace{0.9cm}=\frac{\sqrt{2\pi}\,\alpha \exp\left(\frac{2(\log \alpha)^2}{\log q^{-1}}\right)(-qab,-qac,-qad,-qbc,-qbd,-qcd;q)_\infty}
{q^\frac18\sqrt{\log q^{-1}}(qabcd;q)_\infty}
\nonumber\\
&&\hspace{1.5cm}\times
\frac{q^{-6\binom{n}{2}}(-a^2b^2c^2d^2)^n(q,
-\frac{1}{ab},-\frac{1}{ac},-\frac{1}{ad},-\frac{1}{bc},-\frac{1}{bd},-\frac{1}{cd};q)_n(\frac{1}{qabcd};q)_{2n}
}{(\frac{1}{qabcd};q)_n(\frac{1}{abcd};q)_{2n}}
\delta_{m,n}.\label{corresAWp}
\end{eqnarray}
\end{thm}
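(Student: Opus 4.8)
The plan is to follow exactly the template established for the continuous $q^{-1}$-Hermite and continuous big $q^{-1}$-Hermite cases, adapting the Ismail--Rahman periodization \cite{IsmailRahman1995} to the four-parameter setting. First I would observe that the bilateral summation \eqref{bilat4AW} is not new: it is the discrete orthogonality relation \eqref{orthogqiAWdisc} of Theorem \ref{thm21}, with the finite $q$-shifted factorials in the summand rewritten as infinite $q$-shifted factorials via the standard conversion $(a;q)_k=(a;q)_\infty/(aq^k;q)_\infty$ together with the theta-function reflection that absorbs the Gaussian factor $q^{2k^2-k}\alpha^{4k}$ and the $k$-independent prefactors into the closed form on the right-hand side. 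Thus the evaluation of $\Psi_{m,n}(\alpha;{\mathbf a}|q)$ may be taken as known, and the genuine content of the theorem is the continuous integral correspondence.

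Second, to prove the first equality of \eqref{corresAWp} I would unfold the integral over $\R$ into an integral over $[0,1]$ of the periodized bilateral sum. Denote by $g(x)$ the integrand defining ${\sf K}_{m,n}(\alpha;{\mathbf a}|q)$. Writing $x=t+k$ with $t\in[0,1)$ and $k\in\Z$, and setting $\gamma=q^t\alpha$, the value $g(t+k)$ reproduces the $k$-th summand of $\Psi_{m,n}(\gamma;{\mathbf a}|q)$ times the $k$-independent factor $q^{2t^2-t}\alpha^{4t}$. The algebraic heart is the splitting $2(t+k)^2-(t+k)=(2t^2-t)+(2k^2-k)+4tk$, in which the cross term $q^{4tk}$ combines with $\alpha^{4k}$ to reconstruct exactly $\gamma^{4k}=(q^t\alpha)^{4k}$, while $1+q^{2(t+k)}\alpha^2=1+q^{2k}\gamma^2$ and $(-q^{t+k+1}\alpha{\mathbf a},\tfrac{q^{1-t-k}}{\alpha}{\mathbf a};q)_\infty=(-q^{k+1}\gamma{\mathbf a},\tfrac{q^{1-k}}{\gamma}{\mathbf a};q)_\infty$. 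Summing over $k$ and integrating over $t$ then yields the second line of \eqref{corresAWp}; more generally, inserting any unit-periodic weight $\omega(x;q)$ leaves the computation intact, and the choice $\omega(x;q)\equiv 1$ gives the stated identity.

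Third, I would substitute the explicit evaluation of $\Psi_{m,n}(q^x\alpha;{\mathbf a}|q)$ coming from \eqref{bilat4AW}. Under $\alpha\mapsto q^x\alpha$ only the factor $(-q^{2x}\alpha^2,-\tfrac{q^{1-2x}}{\alpha^2};q)_\infty$ acquires $x$-dependence, whereas the $n$-dependent norm, the Kronecker delta $\delta_{m,n}$, and the remaining infinite products in $a,b,c,d$ are constants that pull outside the integral. What is left inside is precisely ${\sf J}(\alpha|q)$ of \eqref{Jint}, already evaluated through the Gaussian integral; the factor $(q;q)_\infty$ appearing in the pulled-out numerator product $(q,-qab,-qac,-qad,-qbc,-qbd,-qcd;q)_\infty$ then cancels the $(q;q)_\infty$ in the denominator of ${\sf J}(\alpha|q)$, producing the closed form on the final line of \eqref{corresAWp}.

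I expect the main obstacle to be the justification of the term-by-term interchange of summation and integration that underlies the unfolding, i.e.\ the appeal to Fubini. This requires absolute convergence of $\sum_k\int_0^1|g(t+k)|\,\dd t$, which in turn rests on the geometric decay of the bilateral summand as $k\to\pm\infty$; exactly as in the proof of Theorem \ref{thm21}, this decay is governed by the hypothesis $|abcd|<|q|^{2N-1}$ together with $n,m\le N$, so that $|q^{1-2n}abcd|<1$ controls the tails, while the Gaussian factor $q^{2x^2}$ secures convergence of the real-line integral as $x\to\pm\infty$. Once this interchange is licensed, the remaining steps are the routine manipulations above.
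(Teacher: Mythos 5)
Your proposal is correct and follows essentially the same route as the paper's proof: identify \eqref{bilat4AW} with the known discrete relation \eqref{orthogqiAWdisc}, unfold the real-line integral into an integral over $[0,1]$ of the periodized bilateral sum (the Ismail--Rahman method, whose algebraic details you spell out explicitly where the paper merely cites \cite{IsmailRahman1995}), insert the evaluation of $\Psi_{m,n}$, pull out the $x$-independent factors, and evaluate the surviving integral as ${\sf J}(\alpha|q)$ from \eqref{Jint} with the $(q;q)_\infty$ cancellation. The only difference is bookkeeping for convergence: the paper derives the asymptotics of the integrand directly as $x\to\pm\infty$ from \eqref{qiAWPP2} (obtaining the conditions $|q^{2n-1}abcd|<1$ and $|q^{1-2n}abcd|<1$), whereas you reduce integrability to the geometric decay of the discrete summand already established in the proof of Theorem \ref{thm21} via Fubini--Tonelli; both give the same constraint $|abcd|<|q|^{2N-1}$.
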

\begin{proof}
The infinite discrete bilateral orthogonality relation for $q^{-1}$-Askey--Wilson polynomials \eqref{bilat4AW} is known,  namely \eqref{orthogqiAWdisc} with the finite $q$-shifted factorials rewritten as infinite $q$-shifted factorials.  
Then using 
the methods in \cite{IsmailRahman1995} one has 
\begin{eqnarray}
&&\hspace{-1.0cm}
\int_{-\infty}^\infty
(1+q^{2x}\alpha^2)\bm{\mathsf p}_n[q^{x}\alpha;{\mathbf a}|q]
\bm{\mathsf p}_m[q^{x}\alpha;{\mathbf a}|q]
(-q^{x+1}\alpha {\mathbf a},\frac{q^{1-x}}{\alpha}{\mathbf a}
;q)_\infty 
q^{2x^2-x}\alpha^{4x}\omega(x;q)\,\dd x\nonumber\\
&&\hspace{-0.4cm}=\int_{0}^1 \Psi_{m,n}(q^x\alpha;{\mathbf a}|q)q^{2x^2-x}\alpha^{4x}\omega(x;q)\,\dd x\nonumber\\
&&\hspace{-0.4cm}=q^{-\binom{n}{2}}\frac{(q;q)_n}{q^n}\frac{(q,-q^{1-n}ab,-q^{1-n}ac,-q^{1-n}ad,-q^{1-n}bc,-q^{1-n}bd,-q^{1-n}cd,q^{2-2n}abcd;q)_\infty}{(q^{1-2n}abcd,q^{2-n}abcd;q)_\infty}\nonumber\\
&&\hspace{0.5cm}\times\int_0^1(-q^{2x}\alpha^2,-\frac{q^{1-2x}}{\alpha^2};q)_\infty q^{2x^2-x}\alpha^{4x}\omega(\alpha;q)\,\dd x.\label{corresAW}
\end{eqnarray}
where $\omega(x;q)$ is unit-periodic on 
$x\in\R$. Choosing $\omega(x;q)\equiv 1$ 
and evaluating the integral \eqref{Jint} provides the result.
Now define the first integral on the left-hand side of \eqref{corresAWp} as
\begin{equation}
\int_{-\infty}^\infty {\sf C}_{m,n}(x;\alpha;{\bf a}|q)\,\dd x.
\end{equation}
Then using \eqref{qiAWPP2}, one can see that
as $x\to\pm\infty$, one has
\begin{equation}
{\sf C}_{n,n}(x;\alpha;{\bf a}|q)\sim 
\left\{ \begin{array}{ll}
\displaystyle{\sf D}_n^{+}(\alpha;{\bf a}|q)\,q^{2x^2-(2n+1)x}\alpha^{4x}\left(\frac{q^{1-x}\,{\bf a}}{\alpha};q\right)_\infty,
& \qquad\mathrm{as}\ x\to \infty\\[10pt]
\displaystyle {\sf D}_n^{-}(\alpha;{\bf a}|q)\,q^{2x^2+(2n+1)x}\alpha^{4x}\left(-q^{x+1}\alpha{\bf a};q\right)_\infty,
& \qquad\mathrm{as}\  x\to -\infty, \nonumber
\end{array} \right.
\end{equation}
where the constants ${\sf D}_n^{\pm}(\alpha;{\bf a}|q)$ given by
\begin{eqnarray}
&&\hspace{-7.0cm}{\sf D}_n^{+}(\alpha;{\bf a}|q):=
q^{-6\binom{n}{2}}\left(\frac{abcd}{\alpha}\right)^{2n}
\frac{(\frac{1}{qabcd},\frac{1}{qabcd};q)_{2n}}{(\frac{1}{qabcd},\frac{1}{qabcd};q)_{n}},\\
&&\hspace{-7.0cm}{\sf D}_n^{-}(\alpha;{\bf a}|q):=\alpha^2
q^{-6\binom{n}{2}}(abcd\alpha)^{2n}
\frac{(\frac{1}{qabcd},\frac{1}{qabcd};q)_{2n}}{(\frac{1}{qabcd},\frac{1}{qabcd};q)_{n}},
\end{eqnarray}
are independent of $x$.
Now for $x\to \pm\infty$, set $x=\pm(m+\epsilon)$ with $|\epsilon|<1$, $m\in\N_0$, $m\to\infty$. After simplification we see that convergence as $x\to\infty$  requires $|q^{2n-1}abcd|<1$ and convergence as $x\to-\infty$ requires that $|q^{1-2n}abcd|<1$. Therefore convergence of the integral is guaranteed for $n,m\le N$ where $|qabcd|<|q|^{2N}$. This completes the proof.
\end{proof}

\noindent This leads us to a new $q$-beta integral which arises by selecting the $K_{0,0}(\alpha;{\bf a}|q)$ term in \eqref{corresAWp}.

\begin{thm}
\label{qbetaint}
Let $q\in\CCdag$, $\alpha,a,b,c,d\in\CCast$, ${\bf a}$ be the multiset given by $\{a,b,c,d\}$. Then one has the following $q$-beta integral and $|abcd|<|q|^{-1}$, namely 
\begin{eqnarray}
&&\hspace{-1.5cm}
\int_{-\infty}^\infty
(1+q^{2x}\alpha^2)
(-q^{x+1}\alpha {\mathbf a},\frac{q^{1-x}}{\alpha}{\mathbf a}
;q)_\infty 
q^{2x^2-x}\alpha^{4x}\,\dd x\nonumber\\
&&\hspace{0.9cm}=\frac{\sqrt{2\pi}\,\alpha \exp\left(\frac{2(\log \alpha)^2}{\log q^{-1}}\right)(-qab,-qac,-qad,-qbc,-qbd,-qcd;q)_\infty}
{q^\frac18\sqrt{\log q^{-1}}(qabcd;q)_\infty}.\label{qbetaCohl}
\end{eqnarray}
\end{thm}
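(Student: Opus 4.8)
The plan is to read off \eqref{qbetaCohl} as the $(m,n)=(0,0)$ specialization of the $q^{-1}$-Askey--Wilson correspondence already proved in \eqref{corresAWp}; the remark preceding the statement signals exactly this by calling the integral the $K_{0,0}(\alpha;{\bf a}|q)$ term. Thus the entire task is to confirm that both sides of \eqref{corresAWp} collapse to \eqref{qbetaCohl} at $n=m=0$ and that the convergence hypothesis specializes to the stated range.

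First I would check $\bm{\mathsf p}_0[z;{\bf a}|q]=1$. Evaluating \eqref{qiAWPP2} at $n=0$, the prefactors $q^{-3\binom{0}{2}}$, $(-abcdz)^{0}$ and $(-\tfrac{1}{ab},-\tfrac{1}{cz},-\tfrac{1}{dz};q)_0$ are each unity, while the ${}_4\phi_3$ series has leading numerator parameter $q^{-n}=q^{0}=1$, so the factor $(1;q)_k$ annihilates every term with $k\ge1$ and the series equals its $k=0$ value, namely $1$. Hence the integrand of ${\sf K}_{0,0}(\alpha;{\bf a}|q)$ is literally $(1+q^{2x}\alpha^2)(-q^{x+1}\alpha{\bf a},\tfrac{q^{1-x}}{\alpha}{\bf a};q)_\infty q^{2x^2-x}\alpha^{4x}$, which is exactly the integrand on the left of \eqref{qbetaCohl}.

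Next I would set $n=0$ on the right-hand side of \eqref{corresAWp} and observe that every $n$-dependent factor degenerates to $1$: $q^{-6\binom{0}{2}}=1$, $(-a^2b^2c^2d^2)^0=1$, the length-zero $q$-shifted factorials $(q,-\tfrac{1}{ab},-\tfrac{1}{ac},-\tfrac{1}{ad},-\tfrac{1}{bc},-\tfrac{1}{bd},-\tfrac{1}{cd};q)_0=1$, and similarly $(\tfrac{1}{qabcd};q)_0=(\tfrac{1}{abcd};q)_0=1$, while $\delta_{0,0}=1$. What survives is precisely the closed form on the right of \eqref{qbetaCohl}. For the convergence window, the hypothesis of \eqref{corresAWp} demands $n,m\le N$ with $|abcd|<|q|^{2N-1}$; the minimal admissible value $N=0$ then gives exactly $|abcd|<|q|^{-1}$.

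There is no genuine analytic obstacle, since the actual evaluation of the integral---completing the square in the Gaussian factor $q^{2x^2-x}$ and invoking the standard Gaussian integral---was already carried out in \eqref{Jint} and absorbed into \eqref{corresAWp}; the only care required is the bookkeeping of the vanishing $n$-factors and the $N=0$ convergence window. As an independent check one could instead derive \eqref{qbetaCohl} directly by expanding the two infinite products $(-q^{x+1}\alpha{\bf a};q)_\infty$ and $(\tfrac{q^{1-x}}{\alpha}{\bf a};q)_\infty$ (via the $q$-binomial theorem, or a Jacobi-triple-product rearrangement) and integrating term-by-term against $q^{2x^2-x}\alpha^{4x}$, but this route is far more laborious and is rendered unnecessary by the direct specialization.
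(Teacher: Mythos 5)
Your proposal is correct and follows essentially the same route as the paper, whose proof consists precisely of setting $(m,n)=(0,0)$ in \eqref{corresAWp} and noting that the convergence requirement corresponds to the case $N=0$, giving $|abcd|<|q|^{-1}$. Your additional bookkeeping (verifying $\bm{\mathsf p}_0[z;{\bf a}|q]=1$ via \eqref{qiAWPP2} and the collapse of all $n$-dependent factors) merely makes explicit what the paper leaves implicit.
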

\begin{proof}
Setting $(m,n)=(0,0)$ in \eqref{corresAWp} and the requirement for convergence corresponds to Theorem \ref{thm02} with $N=0$. This completes the proof.
\end{proof}

\noindent
We are also able to compute the $q\to 1^{-}$ limit of the above $q$-beta integral.
This is accomplished by converting all the infinite $q$-shifted factorials in Theorem \ref{qbetaint} into $q$-gamma functions which are defined by
\cite[(I.35)]{GaspRah}
\begin{equation}
\Gamma_q(x):=\frac{(q;q)_\infty(1-q)^{1-x}}{(q^x;q)_\infty},
\label{qGam}
\end{equation}
and then computing the limit as $q\to 1^{-}$ since $\lim_{q\to 1^{-}}\Gamma_q(x)=\Gamma(x)$.

\begin{thm}
\label{thm02}
Let $a,b,c,d\in\CCast$, $\Re(a+b+c+d)>-1$. Then one has the following symmetric beta integral
\begin{eqnarray}
&&\hspace{-2.0cm}\int_{-\infty}^\infty
\frac{\dd x}{\Gamma(2x,-2x,1\!+\!a\!+\! x,1\!+\!a\!-\! x,1\!+\!b\!+\! x,1\!+\!b\!-\! x,1\!+\!c\!+\! x,1\!+\!c\!-\! x,1\!+\!d\!+\! x,1\!+\!d\!-\! x)}\nonumber
\\
&&\hspace{-1cm}
=-\frac{1}{2\pi^2}
\frac{\Gamma(a+b+c+d+1)}{\Gamma(a+b+1,a+c+1,a+d+1,b+c+1,b+d+1,c+d+1)},
\label{betaint}
\end{eqnarray}
where we are using the convention that a comma delineated list of argument to the gamma function represents multiplication by separate gamma functions with their corresponding arguments. 
\end{thm}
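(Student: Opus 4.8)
The plan is to obtain \eqref{betaint} as the $q\to 1^-$ limit of the $q$-beta integral of Theorem~\ref{qbetaint}. First I would set $\alpha=1$ in \eqref{qbetaCohl}: the $\alpha$-dependence contributes only the elementary factor $\exp(2(\log\alpha)^2/\log q^{-1})$ and powers of $\alpha$, none of which should survive after normalization. Then I would reparametrize the four parameters by $a\mapsto q^{a}$, $b\mapsto q^{b}$, $c\mapsto q^{c}$, $d\mapsto q^{d}$, so that the new parameters are exactly those appearing in \eqref{betaint}. A reassuring preliminary check is that the convergence hypothesis $|abcd|<|q|^{-1}$ of Theorem~\ref{qbetaint} becomes $|q^{a+b+c+d}|<|q|^{-1}$, i.e.\ $\Re(a+b+c+d)>-1$ once one writes $q=\expe^{-\varepsilon}$ with $\varepsilon=\log q^{-1}\to 0^+$; this is precisely the hypothesis of the target theorem, which confirms the substitution is the right one.

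Next I would convert every infinite $q$-shifted factorial on both sides into $q$-gamma functions via \eqref{qGam}. The positive-argument, base-$q$ factors such as $(q^{1-x}\mathbf a;q)_\infty$ and $(qabcd;q)_\infty$ go over directly, e.g.\ $(q^{1-x+a};q)_\infty=(q;q)_\infty(1-q)^{x-a}/\Gamma_q(1+a-x)$, producing the reciprocal gammas $1/\Gamma(1+a-x)$ in the limit. The factors carrying a minus sign, such as $(-q^{x+1}\mathbf a;q)_\infty$ and $(-qab;q)_\infty$, I would first split with the base-doubling identity $(-t;q)_\infty=(t^2;q^2)_\infty/(t;q)_\infty$ and then express each piece through $\Gamma_q$ and $\Gamma_{q^2}$. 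The Gaussian $q^{2x^2-x}$ and the normalization $(\log q^{-1})^{-1/2}$ would be treated exactly as in the evaluation of ${\sf J}(1|q)$ culminating in \eqref{J1q}, by completing the square and invoking the Gaussian integral. Passing to the limit with $\Gamma_q(z),\Gamma_{q^2}(z)\to\Gamma(z)$ and the standard asymptotics of $(q;q)_\infty$ and $(q;q^2)_\infty$, I expect all the $\varepsilon$-divergent background (powers of $(1-q)$, factors $\expe^{c/\varepsilon}$, and powers of $\varepsilon^{\pm1/2}$) to cancel between the two sides, leaving the finite constant $-1/(2\pi^2)$ times the ratio $\Gamma(a+b+c+d+1)/\prod\Gamma(\cdot+1)$ of \eqref{betaint}.

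The genuinely hard step is justifying the interchange of the limit with the integration over $x\in\R$, and this is where I would spend most of the effort. A termwise (fixed-$x$) limit of the integrand in \eqref{qbetaCohl} does \emph{not} reproduce the integrand of \eqref{betaint}: in the base-doubling the gamma attached to each $(-\,\cdot\,)$ factor cancels, so the $x$-positive partners $\Gamma(1+a+x),\dots,\Gamma(1+d+x)$ and the factors $\Gamma(\pm 2x)$ are not generated factor by factor and must instead emerge from the global Gaussian structure. I therefore anticipate needing a uniform asymptotic estimate—most naturally a Jacobi theta (modular) transformation that recombines $q^{2x^2}$ with the minus-sign products \emph{before} taking the limit—both to produce the correct reciprocal-gamma integrand and to supply a dominating function, so that dominated convergence applies on the region $\Re(a+b+c+d)>-1$. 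Controlling the large-$|x|$ behaviour here is the direct analogue of the asymptotic analysis of ${\sf C}_{n,n}$ and ${\sf D}_n^{\pm}$ carried out after \eqref{qiAWPP2} in the proof of Theorem~\ref{qbetaint}.

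As an independent check, and a natural second proof that sidesteps the delicate $q\to1$ interchange, I would also establish \eqref{betaint} directly. Using the duplication and reflection formulas one rewrites the integrand as
\begin{equation*}
\frac{1}{\Gamma(2x)\Gamma(-2x)}\prod_{t\in\{a,b,c,d\}}\frac{1}{\Gamma(1+t+x)\Gamma(1+t-x)}
=\frac{-2x\sin(2\pi x)}{\pi}\prod_{t\in\{a,b,c,d\}}\frac{1}{\Gamma(1+t+x)\Gamma(1+t-x)},
\end{equation*}
an even, rapidly decaying function of $x$, and recognizes it as a Ramanujan/Barnes-type beta integral. Writing $\sin(2\pi x)$ in exponential form and then evaluating by Mellin--Barnes/residue techniques (or by specializing a known Ramanujan-type beta integral) should evaluate the integral and pin down the constant $-1/(2\pi^2)$, while simultaneously exhibiting $\Re(a+b+c+d)>-1$ as the exact condition for absolute convergence.
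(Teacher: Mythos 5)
Your overall plan---deduce \eqref{betaint} as the $q\to1^-$ limit of Theorem \ref{qbetaint}, backed by an independent direct evaluation---is the same two-pronged strategy as the paper, but both prongs as you describe them have genuine gaps. The fatal one is in the limit argument: you substitute real powers $a\mapsto q^a,\dots$ and set $\alpha=1$, whereas the paper substitutes $(\alpha,a,b,c,d)\mapsto i(q^\alpha,q^a,q^b,q^c,q^d)$. The factor $i$ is not cosmetic: every sign-carrying factor in \eqref{qbetaCohl} contains a product of exactly two of the five parameters, so the imaginary substitution turns $(-q^{x+1}\alpha a;q)_\infty$ into $(q^{1+a+\alpha+x};q)_\infty$, $(-qab;q)_\infty$ into $(q^{1+a+b};q)_\infty$, and $(1+q^{2x}\alpha^2)$ into $(1-q^{2x+2\alpha})$, so that \emph{every} factor becomes a genuine $q$-gamma function via \eqref{qGam}, and the pointwise limit of the integrand is exactly the integrand of \eqref{betaint}: the factor $\alpha^{4x}=i^{4x}q^{4\alpha x}$ supplies $\expe^{2\pi ix}$, whose sine part combines with the $(\alpha+x)$ factor and the reflection formula to produce $1/\Gamma(2x,-2x)$. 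With your real substitution this mechanism is unavailable---as you yourself observe, after base-doubling the $\Gamma_q/\Gamma_{q^2}$ ratios cancel---and the situation is worse than ``needs a dominating function'': the surviving $x$-dependent pieces behave like $\left(\tfrac{1-q}{2}\right)^{4x}$ times $q$-divergent constants, so the integrand has no pointwise limit at all and the mass of the integral drifts to $x\to-\infty$ as $q\to1^-$ (a moving Laplace-point phenomenon); dominated convergence cannot be applied. Nor can a Jacobi theta transformation recombine things, because each pair $(-q^{1+a+x};q)_\infty(q^{1+a-x};q)_\infty$ has mismatched signs and is not a theta product. Finally, freezing $\alpha=1$ at the outset discards exactly the parameter that the correct substitution needs in order to handle $(1+q^{2x}\alpha^2)$ and to generate $\expe^{2\pi i x}$.

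Your fallback direct evaluation starts correctly (reflection gives $1/(\Gamma(2x)\Gamma(-2x))=-2x\sin(2\pi x)/\pi$, which is also the paper's starting point), but ``Mellin--Barnes/residue techniques'' cannot finish it: the integrand is a product of reciprocal gamma functions, hence entire in $x$, with no poles to pick up. The paper's first proof instead runs through Fourier analysis: Ramanujan's transform of each pair, $\mathcal{F}\bigl(1/(\Gamma(1+a_j+x)\Gamma(1+a_j-x))\bigr)(t)=(2\cos(\tfrac12 t))^{2a_j}/\Gamma(2a_j+1)$ supported on $(-\pi,\pi)$; convolution of the four pieces, giving a band-limited $F$ supported in $[-4\pi,4\pi]$; a Fourier cosine (sampling) expansion with coefficients $w_k=\tfrac12 f(\tfrac14 k)$; then differentiation in $t$ and evaluation at $t=2\pi$, which converts the integral into the bilateral sum $-\tfrac{1}{4\pi}\sum_{n\in\Z}(4n+1)f(n+\tfrac14)$, i.e.\ a ${}_5H_5$ evaluated by Dougall's formula. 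That machinery---not residues---is what pins down the constant $-1/(2\pi^2)$; and note that the paper justifies the $q\to1^-$ interchange of limit and integral in its second proof precisely by appeal to this first proof, not by a dominated-convergence argument, so without supplying it your limit route has no independent justification to lean on.
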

\begin{proof}
We obtain this result in two different ways. The first proof is a direct proof of the result by expressing it as bilateral hypergeometric series and then summing it. The second proof is by starting with Theorem \ref{qbetaint} and taking the $q\to 1^{-}$ limit. \\[0.1cm]
\noindent {\it First Proof}:
Let ${\bf a}:=\{a_1,a_2,a_3,a_4\}:=\{a,b,c,d\}$, $a_j>0$, $j\in\{1,2,3,4\}$.
We define functions
\[ f_j(x;a_j):=\frac{1}{\Gamma(1+a_j+x)\Gamma(1+a_j-x)}.\]
We adopt the definition for the Fourier transform given in Erd\'elyi et al.~\cite[Chapter III]{ErdelyiTI}. The Fourier transform
\[ F_j(t;a_j):=\mathcal{F}(f_j)(t):=\int_{-\infty}^\infty \expe^{-ixt} f_j(x;a_j)\,\dd x,\]
is known \cite[(1.2)]{Ramanujan1920} (see also \cite[(3.3.5)]{ErdelyiTI})
\[ F_j(t;a_j)=\begin{cases} \frac{\left(2\cos(\tfrac12 t)\right)^{2a_j}}{\Gamma(2a_j+1)} & \text{if $-\pi<t<\pi$,}\\
0 & \text{otherwise.}\end{cases} \]
Note that
\[ \mathcal{F}(fg)= \frac{1}{2\pi} \mathcal{F}(f)\ast \mathcal{F}(g) ,\]
where $F\ast G$ denotes convolution
\[ (F\ast G)(t)=\int_{-\infty}^\infty F(s)G(t-s)\, \dd s .\]
Set 
\[ f(x):=f_1(x)f_2(x)f_3(x)f_4(x) .\]
Then 
\[ F:= \mathcal{F}(f)=\frac{1}{8\pi^3}F_1\ast F_2\ast F_3\ast F_4.\]
It follows from the definition of the convolution that $F_1\ast F_2$ and $F_3\ast F_4$ vanish outside $[-2\pi,2\pi]$.
Therefore, the even function $F(t)$ vanishes outside $[-4\pi, 4\pi]$. We expand $F(t)$ in a Fourier cosine series
\[ F(t)=\frac12\sum_{k=0}^\infty \epsilon_k w_k \cos(\tfrac14 kt) ,\]
where $\epsilon_0=1$, $\epsilon_k=2$ for $k\in \N$ is the Neumann factor, and
\[ w_k:=\frac{1}{4\pi} \int_{-4\pi}^{4\pi} F(s)\cos(\tfrac14k s)\,\dd s =\frac{1}{4\pi} \int_{-\infty}^\infty F(s)\cos(\tfrac14k s)\,\dd s.\]
For every real $\omega$ we have 
\[ \int_{-\infty}^\infty F(s)\,\expe^{i\omega s}\,\dd s =2\pi  f(\omega) .\]
Therefore,
\[ w_k = \tfrac12 f(\tfrac14 k)=\tfrac12 \prod_{j=1}^4 \frac{1}{\Gamma(1+a_j+\tfrac14k)\Gamma(1+a_j-\tfrac14k)} .\]
Consider the integral
\begin{equation}\label{eq1}
\int_{-\infty}^\infty f(x)\cos(xt)\,\dd x= F(t)=\frac12\sum_{k=0}^\infty \epsilon_k w_k \cos(\tfrac14 k t).
\end{equation}
In particular, if $t=0$ then
\[ \int_{-\infty}^\infty f(x)\,\dd x =
\frac12\sum_{k=0}^\infty \epsilon_k w_k .\]
If we differentiate \eqref{eq1} with respect to $t$ we find
\[ \int_{-\infty}^\infty f(x) x\sin (xt)\,\dd x= -F'(t)=\frac14 \sum_{k=0}^\infty \epsilon_k kw_k \sin(\tfrac14 kt).
 \]
If $t=2\pi$ one obtains
\begin{eqnarray*}
&&\hspace{-1.4cm}\int_{-\infty}^\infty \frac{f(x)}{\Gamma(2x)\Gamma(-2x)}\,\dd x = -\frac2\pi\int_{-\infty}^\infty x\sin(2\pi x) f(x)\,\dd x
=J:=-\frac1{2\pi}\sum_{k=0}^\infty \epsilon_k k w_k \sin(\tfrac12k\pi).
\end{eqnarray*}
Setting $k=4n+1$, $k=4m+3$ for $m,n\in\N_0$ we find
\[ J=-\frac{1}{4\pi}\sum_{n=0}^\infty (4n+1)f(n+\tfrac14)
+\frac{1}{4\pi} \sum_{m=0}^\infty (4m+3)f(m+\tfrac34) .\]
If we replace $m$ by $-n-1$ and note that $f$ is an even function, then this represents $J$ as a bilateral sum
\[ J=-\frac{1}{4\pi} \sum_{n=-\infty}^\infty (4n+1)f(n+\tfrac14) .\]
Using the bilateral hypergeometric function ${}_5H_5$ \cite[(16.4.16)]{NIST:DLMF}
this becomes
\[ J=-\frac{1}{4\pi}
\Hhyp55{\frac54, \frac14-a_1,\frac14-a_2,\frac14-a_3,\frac14- a_4}{ \frac14, a_1+\frac54, a_2+\frac54,a_3+\frac54,a_4+\frac54}{1}
\prod_{j=1}^4\frac{1}{\Gamma(a_j+\tfrac54)\Gamma(a_j+\tfrac34)}.\]
The bilateral hypergeometric function can be evaluated using Dougall's formula \cite[(6.1.2.5)]{Slater66}, and we obtain
\begin{eqnarray*} \hspace{-9.4cm}J=-\frac{1}{2\pi^2} \frac{\Gamma(a_1+a_2+a_3+a_4+1)}{\prod_{1\le i<j\le 4} \Gamma(1+a_i+a_j)} .
\end{eqnarray*}
This completes the first proof.
\\[0.05cm]
{\it Second proof}:~First start with Theorem \ref{qbetaint} and replace $(\alpha,a,b,c,d)\mapsto i(q^\alpha,q^a,q^b,q^c,q^d)$.
Then re-writing it in terms of $q$-gamma functions using \eqref{qGam} leads to
\begin{eqnarray*}
&&\hspace{-1.0cm}\int_{-\infty}^\infty \frac{\Gamma_q(2\alpha+2x+1)\,q^{2x^2+(4\alpha-1) x}\,\expe^{2i\pi x}\,\dd x}
{\Gamma_q(2\alpha+2x,1\!+\!a\!\pm(x+\alpha),1\!+\!b\!\pm(x+\alpha),1\!+\!c\!\pm(x+\alpha),1\!+\!d\!\pm(x+\alpha))}\nonumber\\
&&\hspace{0.0cm}
=\frac{iq^{\alpha-\frac18}\sqrt{2\pi}\exp\left(\frac{2(\log(iq^\alpha ))^2}{\log q^{-1}}\right)}{(q;q)_\infty^3(1-q)\sqrt{\log q^{-1}}}
\frac{\Gamma_q(a\!+\!b\!+\!c\!+\!d\!+\!1)}
{\Gamma_q(a\!+\!b\!+\!1,a\!+\!c\!+\!1,a\!+\!d\!+\!1,b\!+\!c\!+\!1,b\!+\!d\!+\!1,c\!+\!d\!+\!1)}.
\end{eqnarray*}
Taking the $q\to1^{-}$ with justification of interchanging the limit and the integral being provided by the first proof above.
In the limit as $q\to 1^{-}$ of the above expression, we see that all the $q$-gamma functions become gamma functions, powers of $q$ become unity and one has 
\begin{eqnarray*}
&&\hspace{-7.5cm}\lim_{q\to 1^{-}}\frac{iq^{\alpha-\frac18}\sqrt{2\pi}\exp\left(\frac{2(\log(iq^\alpha ))^2}{\log q^{-1}}\right)}{(q;q)_\infty^3(1-q)\sqrt{\log q^{-1}}}=
\sqrt{2\pi}i\expe^{-2i\pi\alpha}{\sf T}.
\end{eqnarray*}
since
\begin{eqnarray*}
&&\hspace{-5.7cm}\sqrt{\log q^{-1}}\sim \sqrt{1-q},\\
&&\hspace{-5.7cm}\exp\left(\frac{2(\log(iq^\alpha ))^2}{\log q^{-1}}\right)\sim \expe^{-2\pi i\alpha}\exp\left(\frac{\pi^2}{2\log q}\right),\quad \mbox{where},\\
&&\hspace{-5.7cm}{\sf T}:=\lim_{q\to 1^{-}}\frac{\exp\left(-\frac{\pi^2}{2\log q^{-1}}\right)}
{(q;q)_\infty^3(1-q)^\frac32}.
\end{eqnarray*}
Then one obtains
\begin{eqnarray*}
&&\hspace{-2.7cm}\int_{-\infty}^\infty
\frac{(\alpha\!+\!x)\,\expe^{2\pi i x}\,\dd x}{\Gamma(1\!+\!a\!\pm \!(\alpha\!+\!x),1\!+\!b\!\pm \!(\alpha\!+\!x),1\!+\!c\!\pm \!(\alpha\!+\!x),1\!+\!d\!\pm \!(\alpha\!+\!x))}
\\
&&\hspace{-1.7cm}
=
\frac{i\sqrt{\frac{\pi}{2}}\expe^{-2i\pi\alpha}{\sf T}\,\Gamma(a+b+c+d+1)}{\Gamma(a+b+1,a+c+1,a+d+1,b+c+1,b+d+1,c+d+1)}.
\end{eqnarray*}
Then making the substitution $y=x+\alpha$ and then replacing $y\mapsto x$ obtains the expression in terms of ${\sf T}$.
If one takes the limit as $a,b,c,d,\alpha\to 0$ of the beta integral, then using the reflection formula for the gamma function \cite[(5.5.3)]{NIST:DLMF}, the beta  
integral becomes 
\begin{eqnarray*}
&&\hspace{-10cm}\int_{-\infty}^\infty
\frac{\expe^{2i\pi x}\,\sin^4(\pi x)}{x^3}\,\dd x=\frac{i\pi^3}{4}.
\end{eqnarray*}
In fact, the similar case where $a,b,c,d\to 0$ ($\alpha\ne 0$) is equivalent to the same integral.
We now see that the value of ${\sf T}$ is given by 
${\sf T}=
1/(2\pi)^\frac32$. Note that one can also see the value of the limit ${\sf T}$ by
examining \cite[(3.13)]{OldeDaalhuis94}.
Then we write the complex exponential using Euler's formula and only the $\sin$ term contributes. Finally we use the reflection formula for gamma functions to write the sine function as a a product of two gamma functions.
The limit of the constraint $|qabcd|\mapsto |q^{a+b+c+d+1}|<1$ provides the updated constraint. This completes the second proof.\\[0.1cm]
\end{proof}

\begin{rem}
Since the beta integral \eqref{betaint} is negative, one cannot use it to build a set of orthogonal polynomials with a positive measure.
\end{rem}

\noindent Now we present the correspondence theorem for the continuous dual $q^{-1}$-Hahn polynomials.

\begin{thm} 
\label{thm02}
Let $n,m\in\N_0$, $q\in\CCdag$, $\alpha,a,b,c\in\CCast$, ${\bf a}$ be the multiset given by $\{a,b,c\}$ and define
\begin{eqnarray}
&&\hspace{-0.8cm}\Psi_{m,n}(\alpha;{\mathbf a}|q)
:=\sum_{k=-\infty}^\infty 
(1+q^{2k}\alpha^2)\bm{\mathsf p}_n[q^k\alpha;{\mathbf a}|q]
\bm{\mathsf p}_m[q^k\alpha;{\mathbf a}|q]
(-q^{k+1}\alpha {\mathbf a},\frac{q^{1-k}}{\alpha}{\mathbf a};q)_\infty q^{2k^2-k}\alpha^{4k}\nonumber\\
&&\hspace{1.35cm}=(q,-\alpha^2,-\frac{q}{\alpha^2},-qab,-qac,-qbc;q)_\infty
q^{-4\binom{n}{2}}\left(\frac{a^2b^2c^2}{q}\right)^n\left(q,
-\frac{1}{ab},-\frac{1}{ac},-\frac{1}{bc};q\right)_n\!
\delta_{m,n},
\label{bilat4cdqiH}
\end{eqnarray}
which provides the infinite discrete bilateral orthogonality relation for the 
continuous dual $q^{-1}$-Hahn polynomials.
Then one has the following continuous integral correspondence
\begin{eqnarray}
&&\hspace{-0.7cm}{\sf K}_{m,n}(\alpha;{\mathbf a}|q)
:=\int_{-\infty}^\infty
(1+q^{2x}\alpha^2)\bm{\mathsf p}_n[q^{x}\alpha;{\mathbf a}|q]
\bm{\mathsf p}_m[q^{x}\alpha;{\mathbf a}|q]
(-q^{x+1}\alpha {\mathbf a},\frac{q^{1-x}}{\alpha}{\mathbf a}
;q)_\infty 
q^{2x^2-x}\alpha^{4x}\,\dd x\nonumber\\
&&\hspace{0.2cm}=\int_{0}^1 \Psi_{m,n}(q^x\alpha;{\mathbf a}|q)q^{2x^2-x}\alpha^{4x}\,\dd x\nonumber\\
&&\hspace{0.2cm}=\frac{\sqrt{2\pi}\,\alpha \exp\left(\frac{2(\log \alpha)^2}{\log q^{-1}}\right)(-qab,-qac,-qbc;q)_\infty}
{q^\frac18\sqrt{\log q^{-1}}}
q^{-4\binom{n}{2}}\left(\frac{a^2b^2c^2}{q}\right)^n\left(q,
-\frac{1}{ab},-\frac{1}{ac},-\frac{1}{bc};q\right)_n\!
\delta_{m,n}.\label{corresAW}
\end{eqnarray}
\end{thm}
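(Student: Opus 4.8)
The plan is to reuse the template of the two preceding correspondence theorems (the continuous big $q^{-1}$-Hermite case \eqref{corres2} and the $q^{-1}$-Askey--Wilson case \eqref{corresAWp}): first recognize \eqref{bilat4cdqiH} as the discrete relation \eqref{orthogcdqiHdisc} of Theorem \ref{thm12} with its finite $q$-shifted factorials rewritten as infinite ones, then transport it to the continuous side by the unit-interval periodization of \cite{IsmailRahman1995}, and finally collapse the surviving $x$-integral onto the Gaussian integral ${\sf J}(\alpha|q)$ evaluated in \eqref{Jint}. As an independent check one may instead let $d\to 0$ in the $q^{-1}$-Askey--Wilson correspondence \eqref{corresAWp} exactly as Theorem \ref{thm12} was deduced from \eqref{orthogqiAWdisc}; I would present the direct argument because, for three parameters, it is actually shorter than the four-parameter case, no convergence restriction on $n,m$ having to be tracked.

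First I would verify that the summand weight $(-q^{k+1}\alpha{\mathbf a},\frac{q^{1-k}}{\alpha}{\mathbf a};q)_\infty q^{2k^2-k}\alpha^{4k}$ appearing in \eqref{bilat4cdqiH} equals the weight $\frac{(\frac{\alpha}{\mathbf a};q)_k}{(-q\alpha{\mathbf a};q)_k}q^{\binom{k}{2}}(-q\alpha abc)^k$ of \eqref{orthogcdqiHdisc} up to a factor independent of $k$. Writing each finite factorial as a quotient of infinite ones, the reciprocal denominator $(-q\alpha{\mathbf a};q)_k^{-1}$ produces $(-q^{k+1}\alpha{\mathbf a};q)_\infty$ directly, while the numerator $(\frac{\alpha}{\mathbf a};q)_k$ must be reflected through the theta quasi-periodicity
\[
(q^{k}x,\,q^{1-k}/x;q)_\infty=(-x)^{-k}q^{-\binom{k}{2}}(x,q/x;q)_\infty,
\]
applied once for each of $x=\alpha/a,\alpha/b,\alpha/c$, so as to turn $(\frac{\alpha}{\mathbf a};q)_k$ into $(q^{1-k}\frac{\mathbf a}{\alpha};q)_\infty$. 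Collecting the powers thrown off by the three reflections and by $q^{\binom{k}{2}}(-q\alpha abc)^k$, all signs cancel (there are four factors of $(-1)^k$) and the exponents collapse to $q^{2k^2-k}\alpha^{4k}$ via $4\binom{k}{2}+k=2k^2-k$. The leftover $k$-independent constant $(-q\alpha{\mathbf a},\frac{q{\mathbf a}}{\alpha};q)_\infty^{-1}$ is absorbed into the norm, where it cancels the denominator of \eqref{orthogcdqiHdisc} and leaves exactly the right-hand side displayed in \eqref{bilat4cdqiH}.

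Next I would run the periodization. Splitting $\int_{-\infty}^\infty$ into $\int_0^1\sum_{k\in\Z}$ through $x\mapsto x+k$, and using the single identity $q^{2k^2-k}(q^{x}\alpha)^{4k}\,q^{2x^2-x}\alpha^{4x}=q^{2(x+k)^2-(x+k)}\alpha^{4(x+k)}$ together with $q^{x+k}\alpha=q^{k}(q^{x}\alpha)$ inside the polynomials and the factorial factor, shows that the periodized integrand is precisely the $k$-th term of $\Psi_{m,n}(q^{x}\alpha;{\mathbf a}|q)$, whence ${\sf K}_{m,n}(\alpha;{\mathbf a}|q)=\int_0^1\Psi_{m,n}(q^{x}\alpha;{\mathbf a}|q)q^{2x^2-x}\alpha^{4x}\,\dd x$. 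Substituting the closed form of $\Psi_{m,n}$ from \eqref{bilat4cdqiH} with $\alpha\mapsto q^{x}\alpha$, the only $x$-dependence not killed by $\delta_{m,n}$ resides in the theta factor $(-q^{2x}\alpha^2,-\frac{q^{1-2x}}{\alpha^2};q)_\infty$ coming from $-\alpha^2$ and $-q/\alpha^2$; the residual $\int_0^1(-q^{2x}\alpha^2,-\frac{q^{1-2x}}{\alpha^2};q)_\infty q^{2x^2-x}\alpha^{4x}\,\dd x$ is exactly ${\sf J}(\alpha|q)$, and its value from \eqref{Jint} carries a factor $(q;q)_\infty^{-1}$ that cancels the $(q;q)_\infty$ inside $(q,-qab,-qac,-qbc;q)_\infty$, yielding \eqref{corresAW}.

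The hard part is the sign-and-exponent bookkeeping of the first step and the justification of interchanging the bilateral sum with the integral; the evaluation itself is then immediate from \eqref{Jint}. In contrast with the four-parameter theorem, no range restriction on $n,m$ is required here: the super-Gaussian weight $q^{2x^2}$ with $q\in(0,1)$ outpaces the growth of the three reflected factorials, each of which contributes at worst $q^{-x^2/2}$ as $x\to+\infty$ (resp.\ $x\to-\infty$), so the integrand decays like $q^{x^2/2-(n+m)|x|}$ at both ends. This same bound dominates the summands uniformly on $[0,1]$ and thereby legitimizes the term-by-term periodization by dominated convergence, so I would devote the write-up almost entirely to the weight identity and cite \cite{IsmailRahman1995} for the periodization, exactly as in the two preceding proofs.
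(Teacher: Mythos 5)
Your proposal is correct and follows essentially the same route as the paper's proof: it identifies \eqref{bilat4cdqiH} as the discrete relation \eqref{orthogcdqiHdisc} with the finite $q$-shifted factorials rewritten as infinite ones, transports it to the continuous side via the unit-periodization method of \cite{IsmailRahman1995}, and evaluates the resulting integral using ${\sf J}(\alpha|q)$ from \eqref{Jint}, whose $(q;q)_\infty^{-1}$ cancels as you say. The extra material you supply (the theta-reflection bookkeeping showing $4\binom{k}{2}+k=2k^2-k$ with the four sign factors cancelling, and the Gaussian-domination argument explaining why no restriction on $n,m$ is needed in the three-parameter case) is sound and merely makes explicit what the paper delegates to its citations.
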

\begin{proof}
The infinite discrete bilateral orthogonality relation for continuous dual $q^{-1}$-Hahn polynomials \eqref{bilat4cdqiH} is known,  namely \eqref{orthogcdqiHdisc} with the finite $q$-shifted factorials rewritten as infinite $q$-shifted factorials.  
Then using 
the methods in \cite{IsmailRahman1995} one has 
\begin{eqnarray}
&&\hspace{-0.8cm}
\int_{-\infty}^\infty
(1+q^{2x}\alpha^2)\bm{\mathsf p}_n[q^{x}\alpha;{\mathbf a}|q]
\bm{\mathsf p}_m[q^{x}\alpha;{\mathbf a}|q]
(-q^{x+1}\alpha {\mathbf a},\frac{q^{1-x}}{\alpha}{\mathbf a}
;q)_\infty 
q^{2x^2-x}\alpha^{4x}\omega(x;q)\,\dd x\nonumber\\
&&\hspace{-0.4cm}=\int_{0}^1 \Psi_{m,n}(q^x\alpha;{\mathbf a}|q)q^{2x^2-x}\alpha^{4x}\omega(x;q)\,\dd x\nonumber\\
&&\hspace{-0.4cm}=q^{-\binom{n}{2}}\frac{(q;q)_n}{q^n}(q,-q^{1-n}ab,-q^{1-n}ac,-q^{1-n}bc;q)_\infty
\int_0^1(-q^{2x}\alpha^2,-\frac{q^{1-2x}}{\alpha^2};q)_\infty q^{2x^2-x}\alpha^{4x}\omega(\alpha;q)\,\dd x.\label{corresAW}
\end{eqnarray}
where $\omega(x;q)$ is unit-periodic on 
$x\in\R$. Choosing $\omega(x;q)\equiv 1$ 
and evaluating the integral \eqref{Jint} completes the proof.
\end{proof}

\begin{thm} 
\label{thm02}
Let $n,m\in\N_0$, $q\in\CCdag$, $\alpha,a,b\in\CCast$, ${\bf a}$ be the multiset given by $\{a,b\}$ and define
\begin{eqnarray}
&&\hspace{-1.6cm}\Psi_{m,n}(\alpha;{\mathbf a}|q)
:=\sum_{k=-\infty}^\infty 
(1+q^{2k}\alpha^2)\bm{\mathsf Q}_n[q^k\alpha;{\mathbf a}|q]
\bm{\mathsf Q}_m[q^k\alpha;{\mathbf a}|q]
(-q^{k+1}\alpha {\mathbf a},\frac{q^{1-k}}{\alpha}{\mathbf a};q)_\infty q^{2k^2-k}\alpha^{4k}\nonumber\\
&&\hspace{0.85cm}=(q,-\alpha^2,-\frac{q}{\alpha^2},-qab;q)_\infty q^{-2\binom{n}{2}}
\left(\frac{ab}{q}\right)^n\left(q,-\frac{1}{ab};q\right)_n\delta_{m,n}.
\label{bilat3}
\end{eqnarray}
Then
\begin{eqnarray}
&&\hspace{-1.9cm}{\sf K}_{m,n}(\alpha;{\mathbf a}|q)
:=\int_{-\infty}^\infty
(1+q^{2x}\alpha^2)\bm{\mathsf Q}_n[q^{x}\alpha;{\mathbf a}|q]
\bm{\mathsf Q}_m[q^{x}\alpha;{\mathbf a}|q]
(-q^{x+1}\alpha {\mathbf a},\frac{q^{1-x}}{\alpha}{\mathbf a}
;q)_\infty 
q^{2x^2-x}\alpha^{4x}\,\dd x\nonumber\\
&&\hspace{-0.5cm}=\int_{0}^1 \Psi_{m,n}(q^x\alpha;{\mathbf a}|q)q^{2x^2-x}\alpha^{4x}\,\dd x\nonumber\\
&&\hspace{-0.5cm}=\frac{\sqrt{2\pi}\,\alpha \exp\left(\frac{2(\log \alpha)^2}{\log q^{-1}}\right)(-qab;q)_\infty}
{q^\frac18\sqrt{\log q^{-1}}}
q^{-\binom{n}{2}}\left(\frac{ab}{q}\right)^n\left(q,-\frac{1}{ab};q\right)_n\delta_{m,n}.\label{corres2}
\end{eqnarray}
\end{thm}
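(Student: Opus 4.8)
The plan is to run the Ismail--Rahman continualization argument \cite{IsmailRahman1995} exactly as in the continuous big $q^{-1}$-Hermite and continuous dual $q^{-1}$-Hahn cases, now specialized to the two-parameter $q^{-1}$-Al-Salam--Chihara family. The discrete input, namely the first equality \eqref{bilat3}, is not proved afresh: it is the bilateral orthogonality relation \eqref{orthogcdqiASCdisc} of Theorem \ref{thm13}, after the terminating ratio $(\frac{\alpha}{\mathbf a};q)_k/(-q\alpha{\mathbf a};q)_k$ is converted into the infinite products $(-q^{k+1}\alpha{\mathbf a},\frac{q^{1-k}}{\alpha}{\mathbf a};q)_\infty$ via $(xq^k;q)_\infty=(x;q)_\infty/(x;q)_k$ and the reflection $(xq^{-k};q)_\infty=(-x)^kq^{-\binom{k+1}{2}}(q/x;q)_k(x;q)_\infty$. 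This rebalances the $k$-dependent powers of $q$ and $\alpha$ and absorbs the factor $(-q\alpha{\mathbf a},\frac{q{\mathbf a}}{\alpha};q)_\infty$ that otherwise sits in the denominator of \eqref{orthogcdqiASCdisc}, so no new summation is required.

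The core step I would carry out is the periodization identity $\int_{-\infty}^\infty F(x)\,\dd x=\int_0^1\sum_{k\in\Z}F(x+k)\,\dd x$ applied to the integrand $F$ of ${\sf K}_{m,n}(\alpha;{\mathbf a}|q)$. Writing $\beta:=q^x\alpha$, the shift $x\mapsto x+k$ acts transparently: $1+q^{2(x+k)}\alpha^2=1+q^{2k}\beta^2$, each polynomial argument becomes $q^k\beta$, the products turn into $(-q^{k+1}\beta{\mathbf a},\frac{q^{1-k}}{\beta}{\mathbf a};q)_\infty$, and the Gaussian factor splits as $q^{2(x+k)^2-(x+k)}\alpha^{4(x+k)}=\bigl(q^{2x^2-x}\alpha^{4x}\bigr)\,q^{2k^2-k}\beta^{4k}$. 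Hence $\sum_{k\in\Z}F(x+k)=\Psi_{m,n}(q^x\alpha;{\mathbf a}|q)\,q^{2x^2-x}\alpha^{4x}$, which is precisely the first equality of the asserted evaluation. As in the earlier theorems I would phrase this with an arbitrary unit-periodic $\omega(x;q)$ and then set $\omega\equiv1$.

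It then remains to substitute the evaluated $\Psi$ from \eqref{bilat3}, with $\alpha$ replaced by $q^x\alpha$ so that $(-\alpha^2,-\frac{q}{\alpha^2};q)_\infty\mapsto(-q^{2x}\alpha^2,-\frac{q^{1-2x}}{\alpha^2};q)_\infty$. The Kronecker delta and the $x$-free normalization $(q,-qab;q)_\infty\,q^{-2\binom{n}{2}}(ab/q)^n(q,-\frac{1}{ab};q)_n$ come out of the integral, leaving exactly the integral ${\sf J}(\alpha|q)$ of \eqref{Jint}; note that the $q^{-2\binom{n}{2}}$ exponent is carried over unchanged from $\Psi$, just as the $q^{-4\binom{n}{2}}$ and $q^{-6\binom{n}{2}}$ exponents persist in the dual $q^{-1}$-Hahn and $q^{-1}$-Askey--Wilson correspondences. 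Replacing ${\sf J}(\alpha|q)$ by its closed value from \eqref{Jint} and cancelling the common $(q;q)_\infty$ then delivers the closed form, in complete parallel with the continuous $q^{-1}$-Hermite prototype \eqref{corres1}.

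The only genuine subtlety is analytic rather than algebraic: the interchange of $\sum_k$ and $\int_0^1$ (Fubini--Tonelli) together with the absolute convergence of both the bilateral series and the real integral. Unlike the four-parameter $q^{-1}$-Askey--Wilson case, no restriction of the form $|qabcd|<|q|^{2N}$ arises here, because the weight carries the factor $q^{2x^2}$ with $|q|<1$, which overwhelms the Gaussian-order growth of the products $(-q^{x+1}\alpha{\mathbf a};q)_\infty$ and $(\frac{q^{1-x}}{\alpha}{\mathbf a};q)_\infty$ at the two ends and leaves a net, super-exponentially decaying integrand for every $m,n$. I would therefore expect the convergence bookkeeping, and the verification that the shifted Gaussian weight regenerates the discrete bilateral weight term-by-term, to be the parts requiring the most care, with everything else following the template already established.
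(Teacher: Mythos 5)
Your proposal is correct and takes essentially the same route as the paper's proof: recognize \eqref{bilat3} as the discrete relation \eqref{orthogcdqiASCdisc} with the finite $q$-shifted factorials rewritten as infinite ones, apply the Ismail--Rahman periodization with a unit-periodic weight $\omega(x;q)$ then set $\omega\equiv 1$, and evaluate the remaining integral via ${\sf J}(\alpha|q)$ in \eqref{Jint}. Your observation that the $q^{-2\binom{n}{2}}$ exponent carries through unchanged is also correct, and it in fact exposes a typographical slip in the final display \eqref{corres2} of the statement, which prints $q^{-\binom{n}{2}}$ where $q^{-2\binom{n}{2}}$ should appear.
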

\begin{proof}
The infinite discrete bilateral orthogonality relation for $q^{-1}$-Al-Salam--Chihara polynomials \eqref{bilat3} is known,  namely \eqref{orthogcdqiASCdisc} with the finite $q$-shifted factorials rewritten as infinite $q$-shifted factorials.  
Then using 
the methods in \cite{IsmailRahman1995} one has 
\begin{eqnarray}
&&\hspace{-1.8cm}
\int_{-\infty}^\infty
(1+q^{2x}\alpha^2)\bm{\mathsf Q}_n[q^{x}\alpha;{\mathbf a}|q]
\bm{\mathsf Q}_m[q^{x}\alpha;{\mathbf a}|q]
(-q^{x+1}\alpha {\mathbf a},\frac{q^{1-x}}{\alpha}{\mathbf a}
;q)_\infty 
q^{2x^2-x}\alpha^{4x}\omega(x;q)\,\dd x\nonumber\\
&&\hspace{-0.4cm}=\int_{0}^1 \Psi_{m,n}(q^x\alpha;{\mathbf a}|q)q^{2x^2-x}\alpha^{4x}\omega(x;q)\,\dd x\nonumber\\
&&\hspace{-0.4cm}=q^{-\binom{n}{2}}\frac{(q;q)_n}{q^n}(q,-q^{1-n}ab;q)_\infty
\int_0^1(-q^{2x}\alpha^2,-\frac{q^{1-2x}}{\alpha^2};q)_\infty q^{2x^2-x}\alpha^{4x}\omega(\alpha;q)\,\dd x.\label{corresAW}
\end{eqnarray}
where $\omega(x;q)$ is unit-periodic on 
$x\in\R$. Choosing $\omega(x;q)\equiv 1$ and evaluating the integral \eqref{Jint} completes the proof.
\end{proof}




\subsection*{Acknowledgements}
We would like to thank Mourad Ismail and Keru Zhou for valuable discussions. \\[-0.8cm]

\def\cprime{$'$} \def\dbar{\leavevmode\hbox to 0pt{\hskip.2ex \accent"16\hss}d}

\end{document}